\newtheorem{thm}{Theorem}[section]
\newtheorem{lem}[thm]{Lemma}
\newtheorem{prop}[thm]{Proposition}
\newtheorem{cor}[thm]{Corollary}
\newcommand{\tr}{\operatorname{tr}}
\newcommand{\id}{\mathbbm{1}}
\newcommand{\diag}{\operatorname{diag}}
\newcommand{\spec}{\operatorname{spec}}
\newcommand{\dist}{\operatorname{dist}}
\newcommand{\R}{\mathbb{R}}
\newcommand{\C}{\mathbb{C}}
\definecolor{linkblue}{rgb}{0.1,0.2,.7}
\definecolor{citegreen}{rgb}{0.1,0.7,.2}
\title{\vspace{-15mm}%
	Perturbation Bounds for Williamson's Symplectic Normal Form}
\author{%
	\large
	\textsc{Martin Idel\footnote{martin.idel@tum.de}, Sebati\'an Soto Gaona, Michael M. Wolf} \\[2mm]
	\normalsize	Zentrum Mathematik, Technische Universit\"{a}t M\"{u}nchen, 85748 Garching \\
	\vspace{-5mm}
	}
\date{}
\begin{document}

\maketitle

\begin{abstract}
Given a real-valued positive semidefinite matrix, Williamson proved that it can be diagonalised using symplectic matrices. The corresponding diagonal values are known as the symplectic spectrum. This paper is concerned with the stability of Williamson's decomposition under perturbations. We provide norm bounds for the stability of the symplectic eigenvalues and prove that if $S$ diagonalises a given matrix $M$ to Williamson form, then $S$ is stable if the symplectic spectrum is nondegenerate and $S^TS$ is always stable. Finally, we sketch a few applications of the results in quantum information theory.
\end{abstract}
\section{Introduction}
It is well-known that the eigenvalues of a Hermitian matrix are stable under small perturbations. This lies at the heart of perturbation theory, which in turn is important in numerical analysis as well as most areas of theoretical physics. Many classic books have been fully devoted to (spectral) perturbation theory in finite \cite{wil65} or infinite \cite{kat95} dimensions and the results are well-known today.

Next to the general notion of eigenvalues of a linear operator, in symplectic linear algebra there exists the notion of \emph{symplectic eigenvalues}: Given a positive semidefinite matrix $M\in \R^{2n\times 2n}$, Williamson \cite{wil36} showed that there exists a symplectic basis such that $M$ is diagonal. The diagonal entries form the symplectic spectrum. The symplectic spectrum plays an important role in continuous variable quantum information, since for Gaussian states, the spectrum of the density matrix (which defines for instance the von Neumann entropy) can be obtained from the symplectic spectrum \cite{hol99}. The literature on symplectic eigenvalue perturbation is not as rich as for the usual eigenvalue problem. First results concerning perturbations for matrices in Williamson normal form can be found in \cite{ser05}. A more general approach was published in \cite{kon15}, and a bound similar to usual matrix perturbation bounds in the literature for matrix analysis has appeared recently in \cite{bha15}. The bounds in the present paper improve on the last result and also consider the Williamson analoga of eigenvectors and eigenspaces.

To summarise the results, let $M=S^TDS$ be the Williamson decomposition of a positive definite matrix. Since the symplectic spectrum is ultimately defined through the usual eigenvalue spectrum of a diagonalisable matrix, the immediate intuition is that the spectrum should be stable, while the diagonalising matrix $S$ will not be stable when symplectic eigenvalues are degenerate. Likewise, there is a chance that $S^TS$ is stable, because this would encode the information about ``symplectic eigenspaces'' and eigenspaces are generally stable \cite{bha96}. In accordance with this intuition, we find:
\begin{itemize}
	\item The symplectic spectrum is stable and we derive norm bounds for all unitarily invariant norms, improving the bounds in \cite{bha15}.
	\item The diagonalising matrix $S$ is stable as long as no eigenvalue crossings occur and we derive a norm bound depending on the smallest gap in the spectrum. We also give a counterexample for the stability of matrices with degenerate eigenvalues.
	\item $S^TS$ is stable and we derive norm bounds for the operator norm.
\end{itemize}

These results can be useful for proving continuity and approximation results at least in the context of continuous variable quantum information. We sketch a few applications in the last section. For the reader's convenience, we recall the most important theorems and a number of small lemmata with simple calculations in Appendix \ref{app:calc}.

\section{Notation and Williamson's normal form}
Throughout this paper, let $\sigma \in \R^{2n\times 2n}$ be the standard symplectic form defined as
\begin{align}
	\sigma=\begin{pmatrix}{} 0 & \id_n \\ -\id_n & 0 \end{pmatrix}.
\end{align}	
Furthermore, denote by $Sp(2n)$ the group of $2n\times 2n$ real symplectic matrices, by $O(n)\subseteq \R^{n\times n}$ the group of real orthogonal matrices, and by $U(n)\subseteq \C^{n\times n}$ the group of unitary matrices. Let us now define the symplectic spectrum through Williamson's theorem:
\begin{thm}[Williamson \cite{wil36}] \label{thm:williamson}
Let $M\in \R^{2n\times 2n}$ be a positive definite matrix. Then there exists a nonnegative diagonal matrix $D\in \R^{n\times n}$ and a symplectic matrix $S\in Sp(2n)$ such that
\begin{align}
	S^TMS=\diag(D,D).
\end{align}
We can assume without loss of generality that $D_{11}\geq D_{22}\geq \ldots \geq D_{nn}> 0$. The entries of $D$ are sometimes called the \emph{symplectic eigenvalues} of $M$ and they are the positive eigenvalues of $i\sigma M$.
\end{thm}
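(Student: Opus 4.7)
My plan is to reduce the statement to the real spectral theorem for skew-symmetric matrices, which guarantees that any real skew-symmetric matrix of even size admits an orthogonal change of basis bringing it into the canonical block form $\begin{pmatrix} 0 & \Lambda \\ -\Lambda & 0 \end{pmatrix}$ with $\Lambda \geq 0$ diagonal. The skew-symmetric matrix I will feed it is $K := M^{-1/2} \sigma M^{-1/2}$, essentially the symplectic form expressed in the frame where $M$ becomes the identity.

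First I would verify that $K$ is real, skew-symmetric, and invertible (the last using positive definiteness of $M$ together with invertibility of $\sigma$), so that the block form produced by the spectral theorem has $\Lambda>0$. This yields $O \in O(2n)$ and a positive diagonal $\Lambda \in \R^{n \times n}$ with $O^T K O = \begin{pmatrix} 0 & \Lambda \\ -\Lambda & 0 \end{pmatrix}$. Setting $D := \Lambda^{-1}$ and permuting columns of $O$ consistently inside each $n$-block, I can arrange $D_{11}\geq\cdots\geq D_{nn}>0$.

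The candidate diagonalising matrix is then $S := M^{-1/2} O \diag(D^{1/2}, D^{1/2})$. Two short calculations finish everything. For the diagonalisation, $S^T M S$ collapses to $\diag(D^{1/2},D^{1/2})\, O^T O\, \diag(D^{1/2},D^{1/2}) = \diag(D,D)$, using only $M^{-1/2} M M^{-1/2} = \id$ and $O^T O = \id$. For the symplecticity, $S^T \sigma S = \diag(D^{1/2},D^{1/2})\, (O^T K O)\, \diag(D^{1/2},D^{1/2})$, and inserting the normal form of $K$ gives exactly $\sigma$. This is precisely why $K$, rather than $\sigma$ or $\sigma M$, is the right object to diagonalise.

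For the eigenvalue identification, from $S^T \sigma S = \sigma$ and $M = S^{-T} \diag(D,D) S^{-1}$, together with $\sigma^2 = -\id$, one gets $\sigma M = S \sigma \diag(D,D) S^{-1}$, so $\sigma M$ is similar to $\begin{pmatrix} 0 & D \\ -D & 0 \end{pmatrix}$, whose spectrum is $\{\pm i D_{jj}\}$. Hence $i\sigma M$ has spectrum $\{\pm D_{jj}\}$ and its positive eigenvalues are exactly the symplectic eigenvalues. The only real conceptual hurdle, and what I would flag as the main obstacle on first reading, is recognising that the \emph{right} skew-symmetric matrix to feed to the spectral theorem is $M^{-1/2}\sigma M^{-1/2}$; this specific choice is what makes the symplecticity of $S$ fall out in a single line rather than needing a separate argument.
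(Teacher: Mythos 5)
Your proposal is correct and follows essentially the same route as the paper: both use the ansatz $S=M^{-1/2}O\,\diag(D^{1/2},D^{1/2})$ and reduce symplecticity of $S$ to bringing the skew-symmetric matrix $M^{-1/2}\sigma M^{-1/2}$ into canonical block form by an orthogonal $O$. The only cosmetic difference is that you invoke the real spectral theorem for skew-symmetric matrices directly, while the paper reaches the same $O$ by diagonalising the Hermitian matrix $iM^{1/2}\sigma M^{1/2}$ over $\C$ and then taking real and imaginary parts of the paired eigenvectors.
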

The theorem can be extended to the case of positive semidefinite matrices $M$. In this case, $S^TMS=\diag(D_1,D_2)$ where $D_1$ and $D_2$ contain zeros. 
\begin{proof}
One proof that covers also the case of semidefinite matrices can be found in \cite{deg06}. We sketch the proof of \cite{sim99} and \cite{sot15}:

Let $\diag(D,D)=:\tilde{D}$. Using that $D$ and $M$ are positive definite, consider an ansatz of the form $S=M^{-1/2}K\tilde{D}^{1/2}$, where $K\in O(2n)$. By construction $S^TMS=\tilde{D}$ and we only need to check that $K$ can be chosen such that $S$ is symplectic. This is equivalent to 
\begin{align*}
	K^T(M^{-1/2}\sigma M^{-1/2})K=\begin{pmatrix}{} 0 & D^{-1} \\ -D^{-1} & 0 \end{pmatrix}.
\end{align*}
Using that $(M^{-1/2}\sigma M^{-1/2})^T=-M^{-1/2}\sigma M^{-1/2}$, we know that we can indeed find an orthogonal $K$ achieving this construction. The idea is that $iM^{1/2}\sigma M^{1/2}$ is a Hermitian matrix and therefore diagonalisable by a unitary matrix $U\in U(2n)$. It is easy to see that the eigenvalues come in pairs $\pm \lambda_j$ with eigenvectors $x_j\pm iv_j$ for $j=1,\ldots,n$ and $x_j,y_j\in \R^{2n}$. One can then show that $K$ is given by $(x_1,\ldots, x_n,y_1,\ldots,y_n)$ using that $\sigma x_j=y_j$ and $\sigma y_j=-x_j$.
\end{proof}
The goal of the main part of this paper is to consider the stability of the symplectic eigenvalues, the diagonalising matrix $S$ and the matrix $S^TS$. 

\section{Stability of the symplectic eigenvalues}
Let us first consider the stability of $D$:
\begin{thm} \label{thm:stability}
Let $M,M^{\prime}\in \R^{2n\times 2n}$ be two positive definite matrices and $\tilde{D},\tilde{D}^{\prime}$ their Williamson diagonalisations as in Theorem \ref{thm:williamson}. Then
\begin{align}
	\|\tilde{D}-\tilde{D}^{\prime}\|\leq (\kappa(M)\kappa(M^{\prime}))^{1/2}\|M-M^{\prime}\|
\end{align}
for every unitarily invariant norm $\|\cdot \|$, where $\kappa(M)$ is the condition number of $M$.
\end{thm}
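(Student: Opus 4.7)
The plan is to reduce the statement to a Hermitian perturbation problem. Recall from the proof of Theorem~\ref{thm:williamson} that the symplectic eigenvalues $d_{1},\ldots ,d_{n}$ of $M$ are the positive eigenvalues of the Hermitian matrix $A := iM^{1/2}\sigma M^{1/2}$, whose full spectrum is the symmetric set $\{d_{1},\ldots ,d_{n},-d_{n},\ldots ,-d_{1}\}$; define analogously $A' := i(M')^{1/2}\sigma (M')^{1/2}$. Since $\tilde{D}=\diag(D,D)$ lists each $d_{j}$ with multiplicity two and any unitarily invariant norm depends only on singular values, the norm of the sorted diagonal difference between the spectra of $A$ and $A'$ coincides with $\|\tilde{D}-\tilde{D}'\|$. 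Applying Mirsky's theorem for Hermitian matrices to the pair $(A,A')$ then yields
\begin{equation*}
    \|\tilde{D}-\tilde{D}'\| \;\leq\; \|A-A'\| \;=\; \|M^{1/2}\sigma M^{1/2}-(M')^{1/2}\sigma(M')^{1/2}\|.
\end{equation*}

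Next I would peel off $\sigma$ by the telescoping identity
\begin{equation*}
    M^{1/2}\sigma M^{1/2}-(M')^{1/2}\sigma(M')^{1/2}
    = M^{1/2}\sigma\bigl(M^{1/2}-(M')^{1/2}\bigr) + \bigl(M^{1/2}-(M')^{1/2}\bigr)\sigma (M')^{1/2}.
\end{equation*}
Applying the triangle inequality, the submultiplicativity $\|XY\|\leq \|X\|_{\infty}\|Y\|$ valid for every unitarily invariant norm, and the fact that $\sigma\in O(2n)$ drops out under unitary invariance, I obtain
\begin{equation*}
    \|M^{1/2}\sigma M^{1/2}-(M')^{1/2}\sigma(M')^{1/2}\| \;\leq\; \bigl(\|M\|_{\infty}^{1/2}+\|M'\|_{\infty}^{1/2}\bigr)\,\|M^{1/2}-(M')^{1/2}\|.
\end{equation*}

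The main technical step is then to control the difference of operator square roots symmetrically in $M$ and $M'$. Using the classical integral representation $X^{1/2}=\pi^{-1}\int_{0}^{\infty} t^{-1/2}X(X+t)^{-1}\,dt$ valid for positive definite $X$, together with the second resolvent identity, one gets
\begin{equation*}
    M^{1/2}-(M')^{1/2} \;=\; \pi^{-1}\int_{0}^{\infty} t^{1/2}(M+t)^{-1}(M-M')(M'+t)^{-1}\,dt,
\end{equation*}
and the elementary evaluation $\int_{0}^{\infty} t^{1/2}(a+t)^{-1}(b+t)^{-1}\,dt=\pi/(\sqrt{a}+\sqrt{b})$ for $a,b>0$, combined with $\|(M+t)^{-1}\|_{\infty}\leq (\lambda_{\min}(M)+t)^{-1}$ and likewise for $M'$, produces the bound
\begin{equation*}
    \|M^{1/2}-(M')^{1/2}\| \;\leq\; \frac{\|M-M'\|}{\lambda_{\min}(M)^{1/2}+\lambda_{\min}(M')^{1/2}}.
\end{equation*}

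To finish, I would assemble the pieces and massage the prefactor. The chain gives $\|\tilde{D}-\tilde{D}'\|$ bounded by $C\,\|M-M'\|$ with $C=(\|M\|_{\infty}^{1/2}+\|M'\|_{\infty}^{1/2})/(\lambda_{\min}(M)^{1/2}+\lambda_{\min}(M')^{1/2})$, and the elementary inequality $1/x+1/y\leq 1/u+1/v$ whenever $x\geq u>0$, $y\geq v>0$ immediately yields $C\leq \sqrt{\|M\|_{\infty}\|M'\|_{\infty}/(\lambda_{\min}(M)\lambda_{\min}(M'))}=(\kappa(M)\kappa(M'))^{1/2}$, which is the claimed bound. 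The conceptually crucial choice is to pass to the Hermitian representative $iM^{1/2}\sigma M^{1/2}$ so that Mirsky (rather than Bauer--Fike on the non-normal $i\sigma M$) can be used; the latter route would cost a factor of $\kappa(M)$ or $\kappa(M')$ and destroy the symmetric square-root form in the statement, so engineering the symmetric integral bound for $M^{1/2}-(M')^{1/2}$ is the real heart of the argument.
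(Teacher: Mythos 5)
Your proof is correct, and it takes a genuinely different route from the paper's. The paper passes to the diagonalisable (but non-normal) matrix $i\sigma M$, applies the Bauer--Fike-type bound of Lemma~\ref{lem:bhatialemma} at the cost of a factor $(\kappa(ST)\kappa(S'T'))^{1/2}$, and then spends the rest of the proof bounding the condition number of the diagonalising matrix $S = M^{-1/2}K\tilde D^{1/2}$ via the estimates $\|\tilde D\|_{\infty}\le\|M\|_{\infty}$ and $\|\tilde D^{-1}\|_{\infty}\le\|M^{-1}\|_{\infty}$. You instead pass to the Hermitian representative $iM^{1/2}\sigma M^{1/2}$, where Mirsky's theorem gives the diagonal comparison at no cost, and then shift the entire burden onto a Lipschitz estimate for the matrix square root, obtained from the integral representation. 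Both computations are correct (in particular the identity $\int_0^{\infty}t^{1/2}(a+t)^{-1}(b+t)^{-1}\,dt=\pi/(\sqrt a+\sqrt b)$ and the final algebraic comparison $\frac{a+b}{c+d}\le\frac{ab}{cd}$ for $a\ge c>0$, $b\ge d>0$ are right). What each buys: your intermediate bound
\begin{align*}
\|\tilde D-\tilde D'\|\;\le\;\frac{\|M\|_{\infty}^{1/2}+\|M'\|_{\infty}^{1/2}}{\lambda_{\min}(M)^{1/2}+\lambda_{\min}(M')^{1/2}}\,\|M-M'\|
\end{align*}
is in fact sharper than the stated $(\kappa(M)\kappa(M'))^{1/2}$ whenever the two matrices have different conditioning (e.g.\ $\kappa(M)=1$, $\kappa(M')\gg1$), so your argument proves a mildly stronger result. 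The paper's route, on the other hand, produces as a by-product the estimate $\kappa(S)\le\kappa(M)$ on the symplectic diagonaliser, which is reused in Proposition~\ref{prop:stabilitys} and Theorem~\ref{thm:stabilitysmaller}; your approach does not yield that information. Also note that the paper already uses the Hermitian matrix $iM^{1/2}\sigma M^{1/2}$ and square-root perturbation (Lemma~\ref{lem:squareroot}) in the later proofs, so your method is very much in the spirit of the rest of the paper even though the authors did not use it here.
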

\begin{proof}
First note that by Williamson's theorem, $i\sigma M$ is diagonalisable. This can be seen via $S^{-1} (i\sigma M)S=i\sigma S^TMS=i\sigma \diag(D,D)$ and the fact that the latter has eigenvalues $\pm D_{jj}$ with eigenvectors $(0,\ldots,0,1,0,\ldots,0,\pm i,0,\ldots,0)^T$, where $1$ and $\pm i$ are at positions $j$ and $n+j$. Let $T$ be the matrix diagonalising $i\sigma \diag(D,D)$. Hence $i\sigma M$ is diagonalisable by $ST$ with real eigenvalues and the eigenvalues are given by $\pm D_{ii}$ for $i=1,\ldots n$. 

Using Lemma \ref{lem:bhatialemma} (\cite{bha96} (Theorem VIII.3.9)), we obtain directly:
\begin{align}
	\|\tilde{D}-\tilde{D}^{\prime}\|
		&\leq (\kappa(ST)\kappa(S^{\prime}T^{\prime}))^{1/2}\|i\sigma M-i\sigma M^{\prime}\| \\
		&= (\kappa(ST)\kappa(S^{\prime}T^{\prime}))^{1/2}\|M-M^{\prime}\|
\end{align}
for all unitarily invariant norms. We also used $\|i\sigma N\|=\|N\|$ for all Hermitian $N$ and all unitarily invariant norms, since $i\sigma$ is a unitary matrix. 

Since $i\sigma\diag(D,D)$ is Hermitian, its eigenvectors are orthogonal and we can choose $T\in U(2n)$. Therefore, since $\kappa(A)=\|A^{-1}\|_{\infty}\|A\|_{\infty}$ for all invertible matrices, $\kappa(ST)=\kappa(S)$ as the operator norm $\|\cdot\|_{\infty}$ is unitarily invariant. 

Let us now proceed as in \cite{sot15}: We can write $S=M^{-1/2}K\tilde{D}^{1/2}$ with an orthogonal matrix $K\in O(2n)$ using the proof of Williamson's theorem. Then
\begin{align*}
	\kappa(S)&=\|S\|_{\infty}\|S^{-1}\|_{\infty}= \|M^{-1/2}K\tilde{D}^{1/2}\|_{\infty}\|\tilde{D}^{-1/2}K^TM^{1/2}\|_{\infty} \\
		&\leq \|M^{-1/2}\|_{\infty} \|\tilde{D}^{1/2}\|_{\infty}\|\tilde{D}^{-1/2}\|_{\infty}\|M^{1/2}\|_{\infty} = \kappa(M^{1/2})\kappa(\tilde{D}^{1/2})
\end{align*}
Furthermore, 
\begin{align}
	\|\tilde{D}\|_{\infty}&\leq\max\{s(i\sigma M)\}=\|i\sigma M\|_{\infty} = \|M\|_{\infty} \label{eqn:DM}\\
	\|\tilde{D}^{-1}\|_{\infty}&\leq\max\{s(i\sigma M^{-1})\}=\|i\sigma M^{-1}\|_{\infty}= \|M^{-1}\|_{\infty} \label{eqn:DMinv}
\end{align}
where $s(A)$ denotes the vector of singular values of $A$. Using the $C^*$-property of the operator norm we obtain $\|(\tilde{D}^{1/2})^2\|_{\infty}=\|\tilde{D}^{1/2}\|^2_{\infty}$ and hence
\begin{align*}
	\kappa(S)\leq \kappa(M^{1/2})\kappa(\tilde{D}^{1/2})\leq \kappa(M^{1/2})\kappa(M^{1/2})=\kappa(M).
\end{align*}
Since the same is true for $M^{\prime}$ this completes the proof.
\end{proof}

In \cite{bha15}, the authors provide a different bound of this type, which for the operator norm reads
\begin{align}
	\|\tilde{D}-\tilde{D}^{\prime}\|_{\infty}\leq (\|M\|^{1/2}_{\infty}+\|M^{\prime}\|^{1/2}_{\infty})\|M-M^{\prime}\|_{\infty}^{1/2}.
\end{align}
Note that the scaling in $\|M-M^{\prime}\|_{\infty}$ is better in Theorem \ref{thm:stability} than in Bhatia and Jain's bound \cite{bha15}.

One natural question is, whether there is hope to improve a lot on this inequality. In particular, let us ask the question whether an inequality of the type 
\begin{align}
	\|\tilde{D}-\tilde{D}^{\prime}\|\leq c\|M-M^{\prime}\| \label{eqn:conj}
\end{align}
holds for some constant $c\in \R$ independent of $M, M^{\prime}$ and some unitarily invariant norm. The answer is ``no'':
\begin{prop}
Consider the following matrices:
\begin{align}
	M=\diag\left(x,1\right) \qquad E= \begin{pmatrix}{} 2 & -5 \\ -5 & -2\end{pmatrix}
\end{align}
Let $M_{\varepsilon}:=M+\varepsilon\cdot E$ for $\varepsilon>0$. Then, for all $0<\varepsilon<1/10$ and for all $c>0$ there exists an $x_0\geq 1$ such that for all $x\geq x_0$ we have
\begin{align}
	\|\tilde{D}-\tilde{D}_{\varepsilon}\|>c\|M-M_{\varepsilon}\|
\end{align}
for all unitarily invariant norms, thereby showing that $c$ must depend on $M$ and $M^{\prime}$ in equation (\ref{eqn:conj}).
\end{prop}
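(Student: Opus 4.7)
The plan is to reduce the problem to an essentially one-parameter calculation by exploiting the very special structure of Williamson's decomposition in dimension $2n=2$, where it degenerates to a determinant.

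First I would use the fact that every symplectic $2\times 2$ matrix has determinant one, so Williamson's theorem $S^TMS=d\,I_2$ forces the single symplectic eigenvalue to be $d=\sqrt{\det M}$. Hence $\tilde D=\sqrt{x}\,I_2$ and $\tilde D_\varepsilon=\sqrt{\det M_\varepsilon}\,I_2$. A brief positivity check (trace $=x+1>0$ and $\det M_\varepsilon \geq 1-29\varepsilon^2>0$ for $\varepsilon<1/10$ and $x\geq 1$) ensures that Theorem~\ref{thm:williamson} applies to both matrices throughout the relevant range.

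Next I would expand $\det M_\varepsilon=x(1-2\varepsilon)+2\varepsilon-29\varepsilon^2$, so that for each fixed $\varepsilon\in(0,1/10)$,
\begin{align*}
    d-d_\varepsilon=\sqrt{x}-\sqrt{x(1-2\varepsilon)+2\varepsilon-29\varepsilon^2}
\end{align*}
behaves like $\sqrt{x}\,\bigl(1-\sqrt{1-2\varepsilon}\bigr)$ for large $x$ and therefore diverges to $+\infty$. Thus the symplectic spectra drift arbitrarily far apart even though $\|M-M_\varepsilon\|=\varepsilon\|E\|$ is a constant independent of $x$.

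Finally, I would upgrade this to a statement uniform over all unitarily invariant norms by exploiting the deliberate choice of $E$. A direct computation gives $E^TE=29\,I_2$, so $E/\sqrt{29}$ is orthogonal and $M-M_\varepsilon=-\varepsilon E$ has both singular values equal to $\varepsilon\sqrt{29}$; likewise $\tilde D-\tilde D_\varepsilon=(d-d_\varepsilon)I_2$ has both singular values equal to $|d-d_\varepsilon|$. For any symmetric gauge function $\phi$ this yields
\begin{align*}
    \frac{\|\tilde D-\tilde D_\varepsilon\|}{\|M-M_\varepsilon\|}=\frac{|d-d_\varepsilon|\,\phi(1,1)}{\varepsilon\sqrt{29}\,\phi(1,1)}=\frac{|d-d_\varepsilon|}{\varepsilon\sqrt{29}},
\end{align*}
a ratio that is independent of the chosen unitarily invariant norm. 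Combined with the divergence established above, choosing $x_0$ so large that $|d-d_\varepsilon|>c\varepsilon\sqrt{29}$ for $x\geq x_0$ completes the argument. The only real subtlety is the simultaneous quantification over all unitarily invariant norms, and the construction $E^TE\propto I_2$ handles it for free by forcing both difference matrices to have equal singular values, so that the norm-dependent factor cancels.
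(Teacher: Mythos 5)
Your proposal is correct and takes essentially the same approach as the paper: positivity check, reduction of the norm inequality to a single scalar comparison by observing that both $\tilde D-\tilde D_\varepsilon$ and $M-M_\varepsilon$ have equal singular values (you via $E^TE=29I_2$, the paper via $\tr E=0$), and divergence of $\sqrt{\det M}-\sqrt{\det M_\varepsilon}$ as $x\to\infty$. Your shortcut $d=\sqrt{\det M}$ in dimension two is a tidier route than the paper's characteristic polynomial of $i\sigma M$, but it produces the same expression and the same argument.
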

\begin{proof}
First note that $M_{\varepsilon}>0$ for $x\geq 1$ and $\varepsilon<1/10$, since trace and determinant are both positive. Note that $\|M-M_{\varepsilon}\|=\varepsilon \|E\|$. Now, since $\tr(E)=0$, the singular values of $E$ are both the same and given by $s(E)=\sqrt{29}$.

Since $\tilde{D}$ is two-dimensional and $M,M_{\varepsilon}$ are invertible, $\tilde{D}$ and $\tilde{D}_{\varepsilon}$ are multiples of the identity. Any unitarily invariant norm is a so called \emph{gauge function} of the singular values  (see \cite{bha96}, chapter IV). Since the matrices $\tilde{D}$ and $\tilde{D}_{\varepsilon}$ have only one singular value (excluding multiplicities), this implies that we can prove the statement for all unitarily invariant norm by proving it for the operator norm only.

Now we need to calculate the singular value of $\tilde{D}$ and $\tilde{D}_{\varepsilon}$, which is the positive eigenvalue of $i\sigma M$ and $i\sigma M_{\varepsilon}$ respectively. The characteristic polynomials are
\begin{align}
	\chi(i\sigma M)&=\lambda^2-x \\ 
	\chi(i\sigma M_{\varepsilon}) &=\lambda^2+5^2\varepsilon^2- \left(1-2\varepsilon\right)\left(x+2\varepsilon\right)
\end{align}
Note that for $x\geq 1$ and $\varepsilon$ small enough ($\varepsilon<1/10$ is sufficient), we have $\lambda_1(i\sigma M)=\sqrt{x}\geq \sqrt{x-2\varepsilon(x-1)-29\varepsilon^2}=\lambda_1(i\sigma M_{\varepsilon})$. Therefore, we have:
\begin{align}
	c\|M-M_{\varepsilon}\|_\infty-\|\tilde{D}-\tilde{D}_{\varepsilon}\|_\infty &< 0 \\
	\Leftrightarrow \quad \sqrt{29}c\varepsilon-|\sqrt{x}-\sqrt{x-2\varepsilon(x-1)-29\varepsilon^2}|&\leq 0 \\
	\Leftrightarrow \quad 2\sqrt{29x}c\varepsilon &\leq 29\varepsilon^2(1+c^2)+2\varepsilon(x-1) \label{eqn:counter1}
\end{align}
if we assume $x\geq 1$ and $\varepsilon<1/10$. For $c=1$, if $x\geq 33$, we have $\sqrt{29x}<(x-1)$ and therefore (independent of $\varepsilon$) $\|\tilde{D}-\tilde{D}_{\varepsilon}\|_{\infty}\geq \|M-M_{\varepsilon}\|_{\infty}$. Similarly, for any $c>0$ we can find $x_0\geq 0$, such that for all $x\geq x_0$ equation (\ref{eqn:counter1}) is satisfied, since $2(x-1)-\sqrt{29x}c\to +\infty$. 
\end{proof}
A further evaluation shows that if one sets $c=\kappa(M)^{1/2}\kappa(M_{\varepsilon})^{1/2}\approx x$ then $c\|M-M_{\varepsilon}\|_{\infty}$ scales as $x\varepsilon$ to lowest order in $\varepsilon$ (for $x\geq 1$), while $\|D-D_{\varepsilon}\|_{\infty}$ scales as $\sqrt{x}\varepsilon$. Therefore, the example above does not attain the bound. Whether the bound $c=\kappa(M)^{1/2}\kappa(M_{\varepsilon})^{1/2}$ is optimal can therefore not be determined by this counterexample. However, the scaling of $c$ in $x=\|M\|_{\infty}$ can only be improved by at most a square root.

\section{Stability of the diagonalising matrix \texorpdfstring{$S$}{S}}
Next, we will analyse stability of the matrix $S$. General wisdom from usual diagonalisation of Hermitian matrices tells us that this should hold at least when the eigenvalues are simple:
\begin{prop} \label{prop:stabilitys}
Let $M\in \R^{2n\times 2n}$ be a positive definite matrix such that all eigenvalues of $i\sigma M$ are nondegenerate and let $S\in Sp(2n)$ be the matrix diagonalising $M$ in Williamson's theorem. Let $E$ be a symmetric matrix with $\|E\|_{\infty}=1$, $\varepsilon>0$ such that $M_{\varepsilon}:=M+\varepsilon E$ is positive definite. Then we have:

For $\varepsilon>0$ small enough, the diagonalising matrix $S_{\varepsilon}\in Sp(2n)$ of $M_{\varepsilon}$ can be chosen in such a way that
\begin{align}
	\|S-S_{\varepsilon}\|_{\infty}<
		4\left(\sqrt{\kappa(M)}+\frac{\sqrt{n^{3}\|M\|_{\infty}/\|M^{-1}\|_{\infty}}}{2\delta}\right)
		\|M^{-1/2}\|_{\infty}\sqrt{\varepsilon}.
\end{align}
where $\delta:=\min_{i\neq j}|\lambda_i(i\sigma M)-\lambda_j(i\sigma M)|$ and $\kappa(M)$ is the condition number. 
\end{prop}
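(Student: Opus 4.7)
The plan is to exploit the explicit factorisation $S=M^{-1/2}K\tilde D^{1/2}$ from the proof of Williamson's theorem, where $K\in O(2n)$ is assembled from the real and imaginary parts of the eigenvectors of the Hermitian matrix $H:=iM^{1/2}\sigma M^{1/2}$. Writing the analogous decomposition for $M_{\varepsilon}$ and telescoping gives
\begin{align*}
S-S_{\varepsilon}&=(M^{-1/2}-M_{\varepsilon}^{-1/2})\,K\tilde D^{1/2}\\
&\quad+M_{\varepsilon}^{-1/2}(K-K_{\varepsilon})\,\tilde D^{1/2}\\
&\quad+M_{\varepsilon}^{-1/2}K_{\varepsilon}(\tilde D^{1/2}-\tilde D_{\varepsilon}^{1/2}),
\end{align*}
and I would bound each of the three summands separately in operator norm, exploiting $\|K\|_{\infty}=\|K_{\varepsilon}\|_{\infty}=1$ throughout.

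For the first and third summands the main tool is the non-smooth square-root estimate $\|A^{1/2}-B^{1/2}\|_{\infty}\leq\|A-B\|_{\infty}^{1/2}$, valid for positive semidefinite $A,B$. Applied via the identity $M^{-1/2}-M_{\varepsilon}^{-1/2}=M^{-1/2}(M_{\varepsilon}^{1/2}-M^{1/2})M_{\varepsilon}^{-1/2}$, together with $\|\tilde D\|_{\infty}\leq\|M\|_{\infty}$ from equation~\eqref{eqn:DM}, the first term yields a contribution of order $\sqrt{\kappa(M)}\,\|M^{-1/2}\|_{\infty}\sqrt{\varepsilon}$. Invoking Theorem~\ref{thm:stability} to control $\|\tilde D-\tilde D_{\varepsilon}\|_{\infty}$ and then using the same square-root inequality on $\tilde D$ produces an estimate of identical order for the third term. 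Taking $\varepsilon$ small enough further lets one replace $\|M_{\varepsilon}^{-1/2}\|_{\infty}$ by $\|M^{-1/2}\|_{\infty}$ up to a harmless multiplicative factor, which is then absorbed into the prefactor $4$.

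The principal obstacle is bounding the middle summand through $\|K-K_{\varepsilon}\|_{\infty}$, and this is where the nondegeneracy hypothesis is essential. Since $H$ is Hermitian and similar to $i\sigma M$ through $M^{1/2}$, its spectrum is simple with the same minimum gap $\delta$. Another use of the square-root inequality together with the unitarity of $\sigma$ gives $\|H-H_{\varepsilon}\|_{\infty}\leq 2\sqrt{\|M\|_{\infty}\varepsilon}$. Standard first-order Hermitian eigenvector perturbation --- for instance the explicit formula $\Delta v_{i}=\sum_{j\neq i}(\lambda_{i}-\lambda_{j})^{-1}\langle v_{j},\Delta H\,v_{i}\rangle\,v_{j}$ --- then bounds the $\ell^{2}$-deviation of each eigenvector by $\|H-H_{\varepsilon}\|_{\infty}/\delta$ to leading order. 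For sufficiently small $\varepsilon$ no eigenvalue crossings occur, so phases can be fixed consistently and the real and imaginary parts assemble unambiguously into an orthogonal $K_{\varepsilon}$; passing from column-wise $\ell^{2}$ closeness to an operator-norm bound (pessimistically routed through the Frobenius norm) yields an estimate of order $n^{3/2}\sqrt{\|M\|_{\infty}\varepsilon}/\delta$. Multiplying by $\|M_{\varepsilon}^{-1/2}\|_{\infty}\|\tilde D^{1/2}\|_{\infty}$ reproduces the second term of the stated bound. The delicate steps are the matching of eigenvectors up to permutation and phase --- precisely what the nondegeneracy hypothesis buys and what is expected to fail in the degenerate counterexample --- and the careful bookkeeping of the power of $n$ that appears when inflating vector-level deviations into a matrix-norm bound on the assembled orthogonal matrix.
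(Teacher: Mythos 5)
Your telescoping decomposition of $S-S_\varepsilon$ and the treatment of the first and third summands mirror the paper's own proof exactly. Where you diverge is in the middle summand, and your route there does not in fact give the stated bound. The paper treats the eigenvectors of $iM_\varepsilon^{1/2}\sigma M_\varepsilon^{1/2}$ as analytic functions of $\varepsilon$ (legitimate because $M\mapsto M^{1/2}$ is analytic on the positive-definite cone) and invokes the Wilkinson-type first-order bound of Lemma~\ref{lem:wilkeigen} directly, obtaining $\|x_i-x_i(\varepsilon)\|_2\leq\frac{2n}{\delta}\varepsilon$ and hence $\|K-K_\varepsilon\|_\infty\leq\frac{2n^{3/2}}{\delta}\varepsilon$ -- a bound of order $\varepsilon$. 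The $\sqrt{\varepsilon}$ in the final estimate arises only afterwards, by trading the excess half-power of $\varepsilon$ against the constraint $\|M^{-1/2}\|_\infty\sqrt{\varepsilon}<1/\sqrt{2}$. You instead route through the non-Lipschitz square-root inequality $\|M^{1/2}-M_\varepsilon^{1/2}\|_\infty\leq\sqrt{\varepsilon}$, which gives $\|H-H_\varepsilon\|_\infty=O(\sqrt{\|M\|_\infty\varepsilon})$, and then apply first-order Hermitian eigenvector perturbation to the \emph{finite} perturbation $H_\varepsilon-H$, yielding only $\|K-K_\varepsilon\|_\infty=O\!\left(n^{3/2}\sqrt{\|M\|_\infty\varepsilon}/\delta\right)$. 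This is weaker by a factor of order $\sqrt{\|M\|_\infty/\varepsilon}$. After multiplying by $\|M_\varepsilon^{-1/2}\|_\infty\|\tilde D^{1/2}\|_\infty\leq\sqrt{2}\,\kappa(M)^{1/2}$, your middle term is of order $n^{3/2}\kappa(M)^{1/2}\sqrt{\|M\|_\infty}\sqrt{\varepsilon}/\delta$, carrying an extra factor of $\kappa(M)^{1/2}$ compared with the second summand $\tfrac{2n^{3/2}\sqrt{\|M\|_\infty}}{\delta}\sqrt{\varepsilon}$ of the stated bound (note $\sqrt{\|M\|_\infty/\|M^{-1}\|_\infty}\,\|M^{-1/2}\|_\infty=\sqrt{\|M\|_\infty}$). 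So your claim that the computation ``reproduces the second term of the stated bound'' is not justified: your detour through the square-root estimate throws away the $O(\varepsilon)$ control of $\|K-K_\varepsilon\|_\infty$ that the analytic argument retains, and this is precisely what produces the stated constant.

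One caveat that is to your credit: the paper's own invocation of Lemma~\ref{lem:wilkeigen} is itself informal, since the actual perturbation of $iM^{1/2}\sigma M^{1/2}$ is $\varepsilon A'+O(\varepsilon^2)$ with $A'$ the derivative at $\varepsilon=0$, and $\|A'\|_\infty$ can be as large as $\kappa(M)^{1/2}$, so the lemma's hypothesis $\|B\|_\infty\leq1$ is not literally met by noting $\|E\|_\infty\leq1$. Your route makes a comparable price visible explicitly, but in a form that does not match the stated proposition; the paper hides it behind $c_{\mathrm{vec}}$ and the ``constant is probably not optimal'' caveat.
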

\begin{proof}
First observe that $\|S-S_{\varepsilon}\|_{\infty}=O(\sqrt\varepsilon)$ cannot be true for all choices of $S$ and $S_{\varepsilon}$ if those matrices are not unique, which occurs whenever there exists a matrix $O\in Sp(2n)\cap O(2n)$ that commutes with $M$ or $M^{\prime}$. 

We consider the construction of $S$ in the proof of Theorem \ref{thm:williamson} as $S=M^{-1/2}K\tilde{D}^{1/2}$ where $\tilde{D}=\diag(D,D)$. The stability of $S$ depends thus on the stability of $K$. Since the eigenvalues are simple, it is known that the eigenvectors are analytic functions in $\varepsilon$ (\cite{wil65}, chapter 2, section 5). Let $x_i(\varepsilon)$ denote the normalised eigenvectors of $i(M+\varepsilon E)^{-1/2}\sigma (M+\varepsilon E)^{-1/2}$. Then, using Lemma \ref{lem:wilkeigen}, there exists some constant $c_{\mathrm{vec}}$ such that for all $\varepsilon<c_{\mathrm{vec}}$ and all $i$ we have 
\begin{align}
	\|x_i-x_i(\varepsilon)\|_2\leq \frac{2n}{\min_{i\neq j}|\lambda_i(i\sigma M)-\lambda_j(i\sigma M)|}\varepsilon.
\end{align}
Note that $\lambda_j(iM^{1/2}\sigma M^{1/2})=\lambda_j(i\sigma M)$, $iM^{1/2}\sigma M^{1/2}$ is Hermitian and $\|E\|_{\infty}\leq 1$ fulfiling assumptions of Lemma \ref{lem:wilkeigen}.

We know that the parallelogram law holds for the vector norm $\|\cdot \|_2$:
\begin{align*}
	\|\Re(x_i)+i\Im(x_i)-\Re(x_i(\varepsilon))-i\Im(x_i(\varepsilon))\|_2^2 +\|\Re(x_i)-i\Im(x_i)-\Re(x_i(\varepsilon))+i\Im(x_i(\varepsilon))\|_2^2 \\
	=2\|\Re(x_i)-\Re(x_i(\varepsilon))\|_2^2+2\|\Im(x_i)-\Im(x_i(\varepsilon))\|_2^2
\end{align*}
Furthermore, we know that $K$ consists of the real and imaginary parts of eigenvectors of $M^{-1/2}\sigma M^{-1/2}$ and that when $x_i=\Re(x_i)+i\Im(x_i)$ is an eigenvector to the eigenvalue $\lambda_i$, then $x_i^{\prime}=\Re(x_i)-i\Im(x_i)$ is the eigenvector to the eigenvalue $-\lambda_i$. Thus we can find $K_{\varepsilon}$ such that:
\begin{align*}
	\|K-K_{\varepsilon}\|_{\infty}&\leq \left(\sum_{i=1}^{2n}\|K_{i}-(K_{\varepsilon})_{i}\|_2^2\right)^{1/2} \\
		&= \left( \sum_{i=1}^n \|\Re(x_i)-\Re(x_i(\varepsilon))\|_2^2+\sum_{i=1}^n \|\Im(x_i)-\Im(x_i(\varepsilon))\|_2^2\right)^{1/2} \\
		&= \left(\frac{1}{2} \sum_{i=1}^n \|x_i-x_i(\varepsilon)\|_2^2+\frac{1}{2} \sum_{i=1}^n\|x_i^{\prime}-x_i^{\prime}(\varepsilon)\|_2^2\right)^{1/2} \\
		&\stackrel{\mathclap{\mathrm{Lemma~ \ref{lem:wilkeigen}}}}{\leq}\quad~~ \left(\sum_{i=1}^n \frac{4n^2}{\min_{i\neq j}|\lambda_i(i\sigma M)-\lambda_j(i\sigma M)|^2}\varepsilon^2\right)^{1/2} \\
		&\leq \frac{2n^{3/2}}{\min_{i\neq j}|\lambda_i(i\sigma M)-\lambda_j(i\sigma M)|}\varepsilon
\end{align*}
where $K_i$ denotes the $i$-th column of $K$. Here, we used the fact that $\|A\|_{\infty}\leq \|A\|_F$ for the Frobenius norm, which is equivalent to the right hand side of the first inequality.

But then, using equation (\ref{eqn:DM}), Lemma \ref{lem:squareroot}, \ref{lem:inverse} and our stability result Theorem \ref{thm:stability} we obtain:
\begin{align}
	\|S-S_{\varepsilon}\|_{\infty}&= 	
			\|M^{-1/2}K\tilde{D}^{1/2}-M_{\varepsilon}^{-1/2}K_{\varepsilon}\tilde{D}_{\varepsilon}^{1/2}\|_{\infty}
				\nonumber \\
		&\leq \|M^{-1/2}-M_\varepsilon^{-1/2}\|_{\infty}\|\tilde{D}^{1/2}\|_{\infty}
			+\|M^{-1/2}_{\varepsilon}\|_\infty\|K-K_{\varepsilon}\|_{\infty}\|\tilde{D}^{1/2}\|_{\infty} \nonumber \\
		&~~~+\|M^{-1/2}_{\varepsilon}\|_{\infty} \|\tilde{D}^{1/2}-\tilde{D}^{1/2}_{\varepsilon}\|_{\infty} \nonumber \\
		&\leq \|M^{1/2}\|_{\infty}\|M^{-1/2}\|_{\infty}\|M^{-1/2}_\varepsilon\|_{\infty}\varepsilon^{1/2} 
			\label{eqn:terms1} \\
		&~~~+\|M_{\varepsilon}^{-1/2}\|_{\infty} (\kappa(M)\kappa(M_{\varepsilon}))^{1/4}\varepsilon^{1/2}
			\label{eqn:terms2} \\
		&~~~+\|M_{\varepsilon}^{-1/2}\|_{\infty}\|M^{1/2}\|_{\infty}
			\frac{2n^{3/2}}{\min_{i\neq j}|\lambda_i(i\sigma M)-\lambda_j(i\sigma M)|}\varepsilon \label{eqn:terms3}.
\end{align}
We can now use Lemma \ref{lem:inversenorm} and \ref{lem:kappaepsilon} to obtain for $\varepsilon<1/(2\|M^{-1}\|_{\infty})$ and $\varepsilon<\|M\|_{\infty}$:
\begin{align*}
	\|M_{\varepsilon}^{-1/2}\|_{\infty}\leq 2\|M^{-1/2}\|_{\infty}, \qquad
		\kappa(M_{\varepsilon})\leq 4\kappa(M).
\end{align*}
By assumption
\begin{align*}
	\min_{i\neq j}|\lambda_i(i\sigma M)-\lambda_j(i\sigma M)|\geq \delta.
\end{align*}
Hence we have for the summands in (\ref{eqn:terms1}) - (\ref{eqn:terms3})
\begin{align}
	\|M^{1/2}\|_{\infty}\|M^{-1/2}\|_{\infty}\|M^{-1/2}_\varepsilon\|_{\infty}\varepsilon^{1/2}
		&\leq \sqrt{2}\kappa(M)^{1/2}\|M^{-1/2}\|_{\infty}\varepsilon^{1/2} \\
	\|M_{\varepsilon}^{-1/2}\|_{\infty} (\kappa(M)\kappa(M_{\varepsilon}))^{1/4}\varepsilon^{1/2} 
		&\leq \sqrt{2}\|M^{-1/2}\|_{\infty}(4\kappa(M)^2)^{1/4}\varepsilon^{1/2} \\
	\|M_{\varepsilon}^{-1/2}\|_{\infty}\|M^{1/2}\|_{\infty}\frac{2n^{3/2}}{\min_{i\neq j}|\lambda_i-\lambda_j|}\varepsilon 
		&\leq (2\|M^{-1}\|_{\infty}\varepsilon)^{1/2}\|M^{1/2}\|_{\infty}\frac{2n^{3/2}}{\delta}\varepsilon^{1/2} \nonumber \\
		&\leq \|M^{1/2}\|_{\infty}\frac{2n^{3/2}}{\delta}\varepsilon^{1/2}		
\end{align}
where we used $\|M^{-1}\|_{\infty}\varepsilon<1/2$ by assumption on $\varepsilon$.

Put together, this implies that for all $0<\varepsilon<\min\{1/(2\|M^{-1}\|_{\infty}),\|M\|_{\infty}, c_{\mathrm{vec}}\}$,  we can find $S_{\varepsilon}$ diagonalising $M_{\varepsilon}$:
\begin{align}
	\|S-S_{\varepsilon}\|_{\infty}<
		2\left((1+1/\sqrt{2})\kappa(M)^{1/2}+\frac{n^{3/2}\|M\|^{1/2}_{\infty}}{\delta \|M^{-1}\|^{1/2}_{\infty}}\right)
		\|M^{-1/2}\|_{\infty}\varepsilon^{1/2}.
\end{align}
The constant is probably not optimal.
\end{proof}

However, this will not be true in general if we have eigenvalue crossings. To see this, consider the following counterexample: 
\begin{align}
	M=\begin{pmatrix}{} 1 & \varepsilon & 0 & 0 \\ \varepsilon & 1 & 0 & 0\\0 &0 & 1&\varepsilon \\ 0 & 0 & \varepsilon & 1 \end{pmatrix} \quad M^{\prime}=\diag(1+\varepsilon, 1-\varepsilon,1+\varepsilon, 1-\varepsilon)
\end{align}
By Williamson's theorem, for two matrices $S_1,S_2$ diagonalising $M$, we have $S_1^{-1}S_2\in Sp(2n)\cap O(2n)$ and $[S_1^{-1}S_2,M]=0$. Hence we need to consider the commutants of $M$ and $M^{\prime}$. For $\varepsilon>0$, an easy computation shows that $[M,O]=0$ or $[M^{\prime},O]=0$ if and only if 
\begin{align*}
	O=\begin{pmatrix}{} A & B \\ C & D \end{pmatrix},\quad A,B,C,D\in \R^{2\times 2},[A,E]=[B,E]=[C,E]=[D,E]=0,
\end{align*}
where $E\in \R^{2\times 2}$ denotes the upper left block in $M$ or $M^{\prime}$. This reduces the problem to finding the commutant of the upper left blocks in $M$. A simple computation shows that these are independent of $\varepsilon$. More precisely,
\begin{align*}
	[A,\diag(1+\varepsilon,1-\varepsilon)]=0 &\quad \Leftrightarrow \quad A=\diag(a,b),~a,b\in \R \\
	[A,\begin{pmatrix}{} 1 & \varepsilon \\ \varepsilon & 1 \end{pmatrix}]=0 & \quad \Leftrightarrow \quad A=\begin{pmatrix}{} a & b \\ b & a \end{pmatrix},~a,b\in \R.
\end{align*}
But then, the commutant of $M$ and $M^{\prime}$ are independent of $\varepsilon>0$ and so is the intersection of the commutant with $Sp(2n)\cap O(2n)$. Since this intersection is a closed set (commutants are closed), this implies that any matrix $S$ diagonalising $i\sigma M$ with $\|S\|_{\infty}=1$ and any matrix $S^{\prime}$ diagonalising $i\sigma M^{\prime}$ either fulfil $\|S-S^{\prime}\|_{\infty}>C$ for some fixed constant $C>0$ independent of $\varepsilon$, or there is a matrix $S$ diagonalising both $i\sigma M$ and $i\sigma M^{\prime}$. Since $[i\sigma M,i\sigma M^{\prime}]\neq 0$, the two matrices cannot be diagonalised simultaneously, whence $\|S-S^{\prime}\|_{\infty}$ cannot become arbitrarily close to zero. 

\section{Stability of the matrix \texorpdfstring{$S^{-T}S^{-1}$}{S(-T)S(-1)}}
We have seen that $S$ need not be stable when eigenvalue crossings occur, because eigenvectors need not be stable. However, it turns out that $S^{-T}S^{-1}$ is still stable because it contains only the (real parts of) projections onto the eigenspaces, which are stable according to general wisdom. In this section, $\|\cdot \|$ will always denote the norm $\|\cdot \|_{\infty}$ in order not to clutter the text with notation. 

\begin{thm} \label{thm:stabilitysmaller}
Let $M\in \R^{2n\times 2n}$ be a positive definite matrix with Williamson decomposition $M=S^{-T}\tilde{D}S^{-1}$.

Let $E$ be any symmetric matrix with $\|E\|=1$. Assume that $\varepsilon>0$ is small enough such that $M_\varepsilon:=M+\varepsilon E$ is still positive definite and let $M_{\varepsilon}=S^{-T}_{\varepsilon}\tilde{D}_{\varepsilon}S^{-1}_{\varepsilon}$ be its Williamson decomposition.

Then for any $\varepsilon$ such that $M_{\varepsilon}$ is positive definite and
\begin{align}
	0<\varepsilon < \min\left\{\frac{\|M\|}{(6\kappa(M))^{4/3}},\frac{1}{2\|M\|},\|M\|\right\} \label{eqn:epsilonassumption}
\end{align}
with the condition number $\kappa$, then
\begin{align}
	\|S^{-T}S^{-1}-S^{-T}_\varepsilon S^{-1}_\varepsilon \|\leq 9\pi n^3\kappa(M)^2 \|M^{-1}\|^{1/4} \varepsilon^{1/4}.
\end{align}
\end{thm}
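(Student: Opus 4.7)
The plan is to circumvent the unstable orthogonal matrix $K$ appearing in Williamson's construction by expressing $S^{-T}S^{-1}$ entirely in terms of $M$. Starting from $S = M^{-1/2}K\tilde{D}^{1/2}$ with $K \in O(2n)$ and using $K^TK = \id$, one immediately obtains $S^{-T}S^{-1} = (SS^T)^{-1} = M^{1/2}\,K\tilde{D}^{-1}K^T\,M^{1/2}$. The defining property of $K$ in the proof of Theorem \ref{thm:williamson} is $K^T(M^{-1/2}\sigma M^{-1/2})K = \tilde{D}^{-1/2}\sigma\tilde{D}^{-1/2}$. Squaring this identity and using $\sigma\tilde{D}^{-1}\sigma = -\tilde{D}^{-1}$ (which follows from the block structure of $\tilde{D}$) gives $K\tilde{D}^{-2}K^T = -(M^{-1/2}\sigma M^{-1/2})^2 = (iM^{-1/2}\sigma M^{-1/2})^2$, whose positive operator square root is $K\tilde{D}^{-1}K^T = |iM^{-1/2}\sigma M^{-1/2}|$. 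A short calculation using $\sigma^{-1} = -\sigma$ shows that $|iM^{1/2}\sigma M^{1/2}|^{-2} = -M^{-1/2}\sigma M^{-1}\sigma M^{-1/2} = |iM^{-1/2}\sigma M^{-1/2}|^2$, so the two absolute values coincide and the central identity is
\begin{align*}
	S^{-T}S^{-1} = M^{1/2}\,|iM^{1/2}\sigma M^{1/2}|^{-1}\,M^{1/2},
\end{align*}
exhibiting $S^{-T}S^{-1}$ as an explicit continuous function of $M$ alone, with no reference to the unstable $K$.

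With the identity in place, I would split the difference into the three telescoping pieces
\begin{align*}
	S^{-T}S^{-1} - S_\varepsilon^{-T}S_\varepsilon^{-1}
		&= (M^{1/2} - M_\varepsilon^{1/2})\,|H|^{-1}\,M^{1/2}
			+ M_\varepsilon^{1/2}\,(|H|^{-1} - |H_\varepsilon|^{-1})\,M^{1/2} \\
		&\quad + M_\varepsilon^{1/2}\,|H_\varepsilon|^{-1}\,(M^{1/2} - M_\varepsilon^{1/2}),
\end{align*}
writing $H := iM^{1/2}\sigma M^{1/2}$ and $H_\varepsilon := iM_\varepsilon^{1/2}\sigma M_\varepsilon^{1/2}$. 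For the outer square-root factors the operator H\"older bound (essentially Lemma \ref{lem:squareroot}) gives $\|M^{1/2} - M_\varepsilon^{1/2}\| \leq \sqrt{\|M - M_\varepsilon\|} = \sqrt\varepsilon$, and for the norms of the inverses one uses $\||H|^{-1}\| = 1/D_{nn} \leq \|M^{-1}\|$ from equation (\ref{eqn:DMinv}) together with $\||H_\varepsilon|^{-1}\| \leq 2\|M^{-1}\|$ from Lemma \ref{lem:inversenorm}. The middle term is linearised by writing $|H|^{-1} - |H_\varepsilon|^{-1} = |H|^{-1}(|H_\varepsilon| - |H|)|H_\varepsilon|^{-1}$, reducing everything to an estimate of $\||H| - |H_\varepsilon|\|$.

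This last quantity is precisely where the $\varepsilon^{1/4}$ rate emerges. Since $|H|^2 = H^2 \geq 0$ and $|H_\varepsilon|^2 = H_\varepsilon^2 \geq 0$, applying the operator H\"older bound a second time yields $\||H| - |H_\varepsilon|\| \leq \|H^2 - H_\varepsilon^2\|^{1/2}$. Expanding $H^2 = -M^{1/2}\sigma M\sigma M^{1/2}$ and applying the triangle inequality together with the first H\"older bound $\|M^{1/2}-M_\varepsilon^{1/2}\| = O(\sqrt\varepsilon)$ and $\|M-M_\varepsilon\| = \varepsilon$ produces $\|H^2 - H_\varepsilon^2\| = O(\|M\|^{3/2}\sqrt\varepsilon)$, and the two square roots compose into $\||H| - |H_\varepsilon|\| = O(\|M\|^{3/4}\varepsilon^{1/4})$. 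Assembling the three pieces and invoking the hypothesis (\ref{eqn:epsilonassumption}) to absorb the two outer $O(\sqrt\varepsilon)$ contributions into the dominant middle $O(\varepsilon^{1/4})$ term produces the claimed bound, with the stated dependence on $\kappa(M)$ and $\|M^{-1}\|$.

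The main obstacle is the identity of the first paragraph: without it one would be forced to track $K$ directly through the perturbation, and the counterexample of Section 4 rules out any gap-independent bound for $K$ alone. The recognition that the combination $K\tilde{D}^{-1}K^T$ collapses to the continuous function $|iM^{1/2}\sigma M^{1/2}|^{-1}$ of $M$ is exactly the mechanism that restores stability in the degenerate regime. Once the identity is available, the remainder is standard perturbation bookkeeping combining two operator H\"older estimates for square roots with the resolvent identity, and the $n^3$ factor in the stated constant is presumably an artefact of some dimension-sensitive intermediate step (for instance, passing through a Frobenius-type norm at one point in the assembly).
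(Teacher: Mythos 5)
Your proposal is correct, and it takes a genuinely different route from the paper. The paper writes $K\tilde D^{-1}K^T$ as a sum of (real parts of) spectral projections onto clusters of symplectic eigenvalues of width $\|M\|^{3/4}\varepsilon^{1/4}$, then invokes the Davis--Kahan-type projection bound (Lemma~\ref{lem:stablesubspace}) cluster by cluster; the clustering argument and the sum over $n$ projections are exactly what produce the $n^3$ factor in the stated constant. You instead observe that $K\tilde D^{-1}K^T$ is the explicit continuous matrix function $\bigl|iM^{1/2}\sigma M^{1/2}\bigr|^{-1}$ of $M$ alone, eliminating $K$ and the eigenvalue clustering entirely. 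Your derivation of that identity is correct: squaring $K^T(M^{-1/2}\sigma M^{-1/2})K=\tilde D^{-1/2}\sigma\tilde D^{-1/2}$ and using $\sigma\tilde D^{-1}\sigma=-\tilde D^{-1}$ gives $K\tilde D^{-2}K^T=(iM^{-1/2}\sigma M^{-1/2})^2$, positive square roots give $K\tilde D^{-1}K^T=\bigl|iM^{-1/2}\sigma M^{-1/2}\bigr|$, and the $\sigma^{-1}=-\sigma$ manipulation shows this equals $\bigl|iM^{1/2}\sigma M^{1/2}\bigr|^{-1}$. From there, two applications of the square-root H\"older bound (Lemma~\ref{lem:squareroot}) --- once for $M^{1/2}-M_\varepsilon^{1/2}=O(\sqrt\varepsilon)$ and once for $|H|-|H_\varepsilon|\le\|H^2-H_\varepsilon^2\|^{1/2}$ --- together with the resolvent identity on $|H|^{-1}-|H_\varepsilon|^{-1}$ indeed yield the $\varepsilon^{1/4}$ rate. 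What your approach buys is significant: the resulting constant is dimension-free (no $n^3$) and scales as $\|M\|^{7/4}\|M^{-1}\|^2=\kappa(M)^{7/4}\|M^{-1}\|^{1/4}$, which is strictly smaller than the paper's $n^3\kappa(M)^2\|M^{-1}\|^{1/4}$ since $\kappa\ge1$, so your bound strengthens Theorem~\ref{thm:stabilitysmaller}. Your closing remark --- that the $n^3$ is ``presumably an artefact of some dimension-sensitive intermediate step'' --- is exactly right: it is the cluster sum in the paper's projection argument, which your identity makes unnecessary.
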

The inequality can be improved by a more careful analysis of the prefactors.
\begin{proof}
From the proof of Theorem \ref{thm:williamson} we know $S=M^{-1/2}K\tilde{D}^{1/2}$ and therefore 
\begin{align*}
	S^{-T}S^{-1}=M^{1/2}K\tilde{D}^{-1}K^TM^{1/2}, 
\end{align*}
where $\tilde{D}=\diag(d_1,\ldots,d_n,d_1,\ldots,d_n)$ with $d_i>0$ and $K\in O(2n)$ is given by 
\begin{align}
	K=(v_1^{\Re},\ldots,v_n^{\Re},v_1^{\Im},\ldots,v_n^{\Im}).
\end{align}
Here, $v_i=v_i^{\Re}+iv_i^{\Im}$ are the eigenvectors of $iM^{1/2}\sigma M^{1/2}$ corresponding to $d_i$. We have
\begin{align}
	\|S^{-T}S^{-1}-&S^{-T}_{\varepsilon}S^{-1}_{\varepsilon}\| \nonumber\\
		&=\|M^{1/2}K\tilde{D}^{-1}K^TM^{1/2}-M_\varepsilon^{1/2}K_\varepsilon\tilde{D}_\varepsilon^{-1}K_\varepsilon^T M_\varepsilon^{1/2}\| \nonumber\\
	&\leq \|M^{1/2}K\tilde{D}^{-1}K^TM^{1/2}-M_{\varepsilon}^{1/2}K\tilde{D}^{-1}K^TM^{1/2}\| \nonumber\\
		&~~~+\|M_{\varepsilon}^{1/2}K\tilde{D}^{-1}K^TM^{1/2}- M_{\varepsilon}^{1/2}K_{\varepsilon}\tilde{D}_{\varepsilon}^{-1}K_{\varepsilon}^TM^{1/2}\| \nonumber\\
		&~~~+\|M_{\varepsilon}^{1/2}K_{\varepsilon}\tilde{D}_{\varepsilon}^{-1}K_{\varepsilon}^TM^{1/2} -M_{\varepsilon}^{1/2}K_{\varepsilon}\tilde{D}_{\varepsilon}^{-1}K_{\varepsilon}^TM_{\varepsilon}^{1/2}\| \nonumber\\
		&\leq \|M^{1/2}-M_{\varepsilon}^{1/2}\|\|M^{1/2}\|\|\tilde{D}^{-1}\|
		+\|M^{1/2}-M_{\varepsilon}^{1/2}\|\|M_{\varepsilon}^{1/2}\|\|\tilde{D}_{\varepsilon}^{-1}\| \nonumber\\
		&~~~+\|M_{\varepsilon}^{1/2}\|\|M^{1/2}\|\|K\tilde{D}^{-1}K^T-K_{\varepsilon}\tilde{D}_{\varepsilon}^{-1}K_{\varepsilon}^T\| \nonumber \\
		&\leq (\|M^{1/2}\|\|\tilde{D}^{-1}\|+\|M_{\varepsilon}^{1/2}\|\|\tilde{D}_{\varepsilon}^{-1}\|)\|M^{1/2}-M_{\varepsilon}^{1/2}\| \label{eqn:term1} \\
		&~~~+\|M_{\varepsilon}^{1/2}\|\|M^{1/2}\|\|\tilde{D}^{-1}-\tilde{D}_{\varepsilon}^{-1}\| \label{eqn:term2}\\
		&~~~+\|M_{\varepsilon}^{1/2}\|\|M^{1/2}\|\|K\tilde{D}^{-1}K^T-K_{\varepsilon}\tilde{D}^{-1}K_{\varepsilon}^T\| \label{eqn:term3}
\end{align}
We deal with each term separately, where the hardest term is the last. 

Term (\ref{eqn:term1}): Using (\ref{eqn:DMinv}) we have $\|\tilde{D}^{-1}\|\leq \|M^{-1}\|$. For $\|M^{-1}\|\varepsilon<1/2$ and $\varepsilon<\|M\|$, Lemma \ref{lem:squareroot} and \ref{lem:inversenorm} imply
\begin{align}
	(\|M^{1/2}\|\|\tilde{D}^{-1}\|+&\|M_{\varepsilon}^{1/2}\| \|\tilde{D}_{\varepsilon}^{-1}\|)\|M^{1/2}-M_{\varepsilon}^{1/2}\| \nonumber \\
	&\leq (\|M^{1/2}\|\|M^{-1}\|+4\|M^{1/2}\|\|\tilde{M}^{-1}\|)\varepsilon^{1/2} \label{eqn:term1f}\\
	&\leq 5\kappa(M)^{1/2} \|M^{-1}\|^{1/2}\varepsilon^{1/2}. \nonumber
\end{align}

Term (\ref{eqn:term2}): Since $\tilde{S}^{-1}$ diagonalises $M^{-1}\geq 0$, Theorem \ref{thm:stability} implies 
\begin{align*}
	\|\tilde{D}^{-1}-\tilde{D}_{\varepsilon}^{-1}\| &\leq (\kappa(M)\kappa(M_{\varepsilon}))^{1/2}\|M^{-1}-M_{\varepsilon}^{-1}\| \\ 
		&\stackrel{\mathclap{\mathrm{Lemma~}\ref{lem:inverse},\ref{lem:kappaepsilon}}}{\leq}\qquad 4\kappa(M) \|M^{-1}\|^2\varepsilon
\end{align*}
for $\|M^{-1}\|\varepsilon<1/2$ and with $\|M_{\varepsilon}\|\leq \|M\|+\varepsilon\leq 2\|M\|$. Plugging this into (\ref{eqn:term2}) and using $\varepsilon<\|M\|$ we obtain
\begin{align}
	\|M_{\varepsilon}^{1/2}\|\|M^{1/2}\|\|\tilde{D}^{-1}-\tilde{D}_{\varepsilon}^{-1}\| 
		&\leq 4\kappa(M)^{3/2} \|M^{-1}\|\varepsilon \label{eqn:term2f}
\end{align}

Term (\ref{eqn:term3}): The interesting part is $\|K\tilde{D}^{-1}K^T-K_{\varepsilon}\tilde{D}^{-1}K_{\varepsilon}^T\|$. We start by observing:
\begin{align}
	K\tilde{D}^{-1}K^T&=\sum_{i=1}^n d_i^{-1} v_i^{\Re}v_i^{\Re\,T}+\sum_{i=1}^n d_i^{-1} v_i^{\Im}v_i^{\Im\,T} 
		=\sum_{i=1}^n d_i^{-1}\qquad~~~ \sum_{\mathclap{j\in \{k|d_k=d_i,k=1,\ldots,n\}}} (v_j^{\Re}v_j^{\Re\,T}+v_j^{\Im}v_j^{\Im\,T}). \label{eqn:proof2}
\end{align}
Furthermore, 
\begin{align}
	\sum_{\mathclap{j\in \{k|d_k=d_i,k=1,\ldots,n\}}} (v_j^{\Re}v_j^{\Re\,T}+v_j^{\Im}v_j^{\Im\,T}) 
		~~~= \Re \sum_{\mathclap{j\in \{k|d_k=d_i,k=1,\ldots,n\}}} (v_j^{\Re}+iv_j^{\Im})(v_j^{\Re}+iv_j^{\Im})^{*}= \Re(P_M(d_i)) \label{eqn:proof1}
\end{align}
where $\Re$ denotes the real part of the expression and $P_M(d_i)$ denotes the spectral projection onto the eigenvalue subspace of the eigenvalue $d_i$ of $iM^{1/2}\sigma M^{1/2}$. We wish to apply general knowledge about the stability of eigenspaces. For convenience, the relevant theorem (\cite{bha96} Theorem VII.3.2) is stated in Lemma \ref{lem:stablesubspace}. 

In order to apply it, we need to consider the spectrum of $iM^{1/2}_{\varepsilon}\sigma M^{1/2}_{\varepsilon}$: By construction, $\spec(iM^{1/2}\sigma M^{1/2})=\{\pm d_1,\ldots,\pm d_n\}$. Denote the positive eigenvalues as $\spec_+$, then we can write $\spec_+(iM^{1/2}\sigma M^{1/2})=\bigcup_{j=1}^k S_j$ where all $S_j$ contain $d_i$ with multiplicities, fulfil $\operatorname{dist}(S_j,S_k):=\min\{|d-e|~|~d\in S_j,e\in S_k\}>\|M\|^{3/4}\varepsilon^{1/4}$ and $k$ is maximal. 

The stability of the symplectic spectrum implies:
\begin{align}
	\|\tilde{D}-\tilde{D}_{\varepsilon}\|\qquad &\stackrel{\mathclap{\mathrm{Theorem~\ref{thm:stability}}}}{<} \qquad (\kappa(M)\kappa(M_{\varepsilon}))^{1/2}\varepsilon \nonumber
	\\&\stackrel{\mathclap{\mathrm{Lemma~\ref{lem:kappaepsilon}}}}{\leq}\quad 2\kappa(M)\varepsilon 
		= \frac{6\kappa(M)}{\|M\|^{3/4}}\varepsilon^{3/4} \frac{\|M\|^{3/4}\varepsilon^{1/4}}{3} \nonumber \\
	&\stackrel{\mathclap{\mathrm{Assumption~(\ref{eqn:epsilonassumption})}}}{<}\qquad~\|M\|^{3/4}\varepsilon^{1/4}/3 \label{eqn:proof4}
\end{align}
hence if we set $d_i:=\tilde{D}_{ii}$ and $e_i:=(\tilde{D}_{\varepsilon})_{ii}$, then we have
\begin{align}
	|d_i-e_i|<\|M\|^{3/4}\varepsilon^{1/4}/3\quad \forall i=1,\ldots,n. \label{eqn:newmap}
\end{align}
We can now define the multisets $R_j:=\{e_i|d_i\in S_j\}$ for every $S_j$ and make the following observations:
\begin{enumerate}
	\item The diameter of $S_j$ does not exceed $\|M\|^{3/4}\varepsilon^{1/4}|S_j|$. \label{enu:prop1}
	\item $|R_j|=|S_j|$ for every $j=1,\ldots,k$. \label{enu:prop2}
	\item $\dist(R_i,R_j)>1/3\|M\|^{3/4}\varepsilon^{1/4}$ for $i\neq j$. \label{enu:prop3}
\end{enumerate}
Observation \ref{enu:prop1} follows from the maximal number of $S_j$: If the diameter was larger, by the pidgeon-hole principle we could divide $S_j$ into two sets with distance larger than $\|M\|^{3/4}\varepsilon^{1/4}$. 

Observation \ref{enu:prop2} follows, since any $e_i\in R_j$ is at most $1/3\|M\|^{3/4}\varepsilon^{1/4}$ away from some $d_i\in S_j$ and since the distance of $S_j$ and $S_k$ is at least $\|M\|^{3/4}\varepsilon^{1/4}$, $|e_i-d_k|>2/3\|M\|^{3/4}$ for any $d_k\in S_k$ with $k\neq j$. Incidentally, this also proves Observation \ref{enu:prop3}.

Now let $e_i$ be as defined with equation (\ref{eqn:newmap}). Using equation (\ref{eqn:proof1}), we see
\begin{align*}
	\|K\tilde{D}^{-1}K^T-K_{\varepsilon}\tilde{D}^{-1}K_{\varepsilon}^T\|=\left\| \sum_{j=1}^k \sum_{d_i\in S_j}d_i^{-1} (\Re(P_M(d_i))-\Re(P_{M_{\varepsilon}}(e_i)))\right\|
\end{align*}
For every set $S_j$, pick a value $d_{S_j}\in S_j$ and we have:
\begin{align}
	\|K\tilde{D}^{-1}K^T-&K_{\varepsilon}\tilde{D}^{-1}K_{\varepsilon}^T\|=\left\| \sum_{j=1}^k d_{S_j}^{-1} (\Re(P_M(S_j))-\Re(P_{M_{\varepsilon}}(R_j)))\right. \nonumber \\ 
		&~~~+\left.\sum_{j=1}^k \sum_{d_i\in S_k} (d_i^{-1}-d_{S_j}^{-1})\sum_{d_i\in S_j}(\Re(P_M(d_i))-\Re(P_{M_{\varepsilon}}(e_i)))\right\| \nonumber \\
		&\leq \sum_{j=1}^k d_{S_j}^{-1} \|\Re(P_M(S_j))-\Re(P_{M_{\varepsilon}}(R_j))\| \label{eqn:stepinter2} \\
		&~~~+\sum_{j=1}^k \sum_{d_i\in S_j} \frac{|d_i-d_{S_j}|}{d_id_{S_j}}\|\Re(P_M(d_i))-\Re(P_{M_{\varepsilon}}(e_i))\| \label{eqn:stepinter}
\end{align}
Recall that the real part of an operator $T$ is defined as $\Re(T):=(T+T^*)/2$. Since it is clearly linear for all matrices $T$, using that $\|T+T^*\|\leq 2\|T\|$ and the fact that every unitarily invariant norm fulfils $\|T^*\|=\|T\|$ implies 
\begin{align*}
	\|\Re(P_M(S_i))-\Re(P_{M_{\varepsilon}}(R_j))\|\leq \|P_M(S_i)-P_{M_{\varepsilon}}(R_j)\| \quad \forall i,j.
\end{align*}

Now we can apply Lemma \ref{lem:stablesubspace} to the term (\ref{eqn:stepinter2}): Let $P^c_M(S_j)$ and $P^c_{M_{\varepsilon}}(R_i)$ be complementary orthogonal projections such that in particular $P_M(S_j)+P_M^c(S_j)=\id$. Then we have for every $j=1,\ldots,k$:
\begin{align*}
	\|P_M(S_j)-P_{M_{\varepsilon}}(R_j)\|&=\|P_M(S_j)(P_{M_{\varepsilon}}(R_j)+P_{M_{\varepsilon}}^c(R_j))-(P_M(S_j)+P_M^c(S_j))P_{M_{\varepsilon}}(R_j)\| \\
	&=\|P_M(S_j)P_{M_{\varepsilon}}^c(R_j)-P_M^c(S_j)P_{M_{\varepsilon}}(R_j)\| \\
	&\leq \frac{3\pi}{2\varepsilon^{1/4}\|M\|^{3/4}} \|iM^{1/2}\sigma M^{1/2}-iM_{\varepsilon}^{1/2}\sigma M_{\varepsilon}^{1/2}\| \\
	&\leq \frac{3\pi}{2\varepsilon^{1/4}\|M\|^{3/4}}(\|M^{1/2}\|+\|M_{\varepsilon}^{1/2}\|)\|M^{1/2}-M_{\varepsilon}^{1/2}\|.
\end{align*}
Here, we used Observation \ref{enu:prop3} of the decomposition, which gives a lower bound on $\dist(S_j,R_i)$ for $i\neq j$. 

For the term (\ref{eqn:stepinter}) we use that the norm of the difference of two projections never exceeds one:
\begin{align}
	\sum_{j=1}^k \sum_{d_i\in S_j} \frac{|d_i-d_{S_j}|}{d_id_{S_j}}\|\Re(P_M(d_i))-\Re(P_{M_{\varepsilon}}(e_i))\| \leq \sum_{j=1}^k \sum_{d_i\in S_j} \frac{|d_i-d_{S_j}|}{d_id_{S_j}}. 
\end{align}
We can now use the Observation \ref{enu:prop1} and the fact that for any $S_j$, $|S_j|\leq n$. This gives an upper bound to $|d_i-d_{S_j}|$. Furthermore, $|d_id_{S_j}|\geq|d_{\mathrm{min}}|^2=1/\|\tilde{D}^{-1}\|^2$ and hence,
\begin{align}
 	\sum_{j=1}^k \sum_{d_i\in S_j} \frac{|d_i-d_{S_j}|}{d_id_{S_j}} \leq \sum_{j=1}^k n^2\|\tilde{D}^{-1}\|^2\|M\|^{3/4}\varepsilon^{1/4} \leq n^3\|\tilde{D}^{-1}\|^2\|M\|^{3/4}\varepsilon^{1/4}.	
\end{align}
In total, we obtain
\begin{align}
	\|K\tilde{D}^{-1}K^T-K_{\varepsilon}\tilde{D}^{-1}K_{\varepsilon}^T\|
		&\leq \sum_{j=1}^k d_i^{-1} \frac{3\pi}{2\varepsilon^{1/4}\|M\|^{3/4}}(\|M^{1/2}\|+\|M_{\varepsilon}^{1/2}\|)\|M^{1/2}-M_{\varepsilon}^{1/2}\| \nonumber \\ 
		&+n^3\|\tilde{D}^{-1}\|^{2}\|M\|^{3/4}\varepsilon^{1/4}. \label{eqn:proof6}
\end{align}
Now, we know that $\sum_{i=1}^k d_i^{-1}\leq\|\tilde{D}^{-1}\|_1\leq n\|M^{-1}\|$ by equation (\ref{eqn:DMinv}) and the fact that $\|\id\|_1=n$. Using Lemmata \ref{lem:squareroot}, \ref{lem:inversenorm} and \ref{lem:kappaepsilon}, we can now fully evaluate the term (\ref{eqn:term3}):
\begin{align}
	\|M_{\varepsilon}^{1/2}\|\|M^{1/2}\|&\|K\tilde{D}^{-1}K^T-K_{\varepsilon}\tilde{D}^{-1}K_{\varepsilon}^T\| \nonumber \\
	&\leq 2\|M\|\left(n\|M^{-1}\| \frac{9\pi}{2\varepsilon^{1/4}\|M\|^{3/4}}\cdot \|M\|^{1/2}\varepsilon^{1/2}+n^3\|M^{-1}\|^2\|M\|^{3/4}\varepsilon^{1/4}\right) \nonumber \\
	&\leq 9 \pi n \kappa(M)^{3/4} \|M^{-1}\|^{1/4}\varepsilon^{1/4}+n^3\kappa(M)^{7/4}\|M^{-1}\|^{1/4}\varepsilon^{1/4} \label{eqn:term3f}.
\end{align}

Finally, we can put everything together by substitution (\ref{eqn:term1f}), (\ref{eqn:term2f}) and (\ref{eqn:term3f}) into (\ref{eqn:term1})-(\ref{eqn:term3}). Using $\|M^{-1}\|\varepsilon<\|M^{-1}\|^{1/4}\varepsilon^{1/4}<1/2$, we obtain:
\begin{align}
	\|S^{-T}S^{-1}-&S^{-T}_{\varepsilon}S^{-1}_{\varepsilon}\|\leq 9 \pi n^3\kappa(M)^2 \|M^{-1}\|^{1/4}\varepsilon^{1/4}.
\end{align}
The constant is not optimal.
\end{proof}

\section{Applications}
Let us now sketch a few applications of the theorems to quantum information theory (an overview can be found in \cite{ade14}). The basic object in quantum mechanics are the quantum state of a system. In the case of systems consisting of $n$ bosonic modes (such systems are considered especially in quantum optics), an important set of states are the so called \emph{Gaussian states}. They can be characterised by their first and second moments, which correspond to a vector $d\in \R^{2n}$ and a covariance matrix $\gamma\in \R^{2n\times 2n}$. Necessary and sufficient conditions for $\gamma$ to be the covariance matrix of a quantum state are given as $\gamma\geq i\sigma$ by Heisenberg's inequality. \emph{Pure states} then correspond to symplectic positive definite matrices. Given two systems of $n$-modes and a state on two systems given by $\gamma_{AB}\in \R^{4n\times 4n}$, one can consider the \emph{reduced state} of the quantum system, which is given by the upper left $2n\times 2n$-submatrix of $\gamma_{AB}$.

An important quantity in quantum information is the entropy of entanglement. As proven in \cite{hol99}, the \emph{entanglement entropy} for Gaussian states is a continuous function of the symplectic spectrum of the reduced state of a system. Given a Gaussian quantum state with covariance matrix $\gamma$, it is given by
\begin{align}
	H(\gamma)=\sum_{k=1}^n \left(g\left(\frac{d_k+1}{2}\right)-g\left(\frac{d_k-1}{2}\right)\right)
\end{align}
where $g(x)=x\log(x)$ and the $d_k$ are the symplectic eigenvalues.

An easy corollary of Theorem \ref{thm:stability} is the following norm bound on the entropy difference:
\begin{cor} \label{cor:entropy}
For Gaussian states characterised by $(\gamma_{AB},d)$, the entropy of entanglement is continuous in $\gamma_{AB}$. Furthermore, for two states $\gamma$ and $\tilde{\gamma}$ in the interior of the set of covariance matrices, the entropy difference is bounded by
\begin{align*}
	|H(\gamma)-H(\tilde{\gamma})|\leq (\kappa(\gamma)\kappa(\tilde{\gamma})^{1/2})(1+\log(\max(\|\gamma\|_{\infty},(\|\gamma^{-1}\|_{\infty}^{-1}-1)/2)))\|\gamma-\tilde{\gamma}\|_1
\end{align*}
\end{cor}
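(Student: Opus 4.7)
The plan is to reduce the bound on $|H(\gamma)-H(\tilde\gamma)|$ to a bound on the Schatten-$1$ difference of the symplectic spectra, and then invoke Theorem \ref{thm:stability}. Let $d_1,\dots,d_n$ and $\tilde d_1,\dots,\tilde d_n$ be the (decreasingly ordered) symplectic eigenvalues of $\gamma$ and $\tilde\gamma$. The triangle inequality gives
\begin{align*}
|H(\gamma)-H(\tilde\gamma)|\leq \sum_{k=1}^n\left|g\!\left(\tfrac{d_k+1}{2}\right)-g\!\left(\tfrac{\tilde d_k+1}{2}\right)\right|+\sum_{k=1}^n\left|g\!\left(\tfrac{d_k-1}{2}\right)-g\!\left(\tfrac{\tilde d_k-1}{2}\right)\right|,
\end{align*}
and the mean value theorem applied to $g(x)=x\log x$, whose derivative is $g'(x)=1+\log x$, bounds each summand by $L\cdot|d_k-\tilde d_k|/2$ for a Lipschitz constant $L$ common to all terms.

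To make $L$ explicit, I would use the a priori bounds from the Williamson decomposition: equation (\ref{eqn:DM}) gives $d_k\leq\|\gamma\|_\infty$ and equation (\ref{eqn:DMinv}) gives $d_k\geq\|\gamma^{-1}\|_\infty^{-1}$ (and analogously for $\tilde d_k$), so every argument $(d_k\pm 1)/2$ and $(\tilde d_k\pm 1)/2$ lies in an interval determined by $\|\gamma\|_\infty$ and $\|\gamma^{-1}\|_\infty^{-1}$. Since $g'$ is monotone, $|g'|$ attains its supremum on this interval at an endpoint, producing the factor $1+\log(\max(\|\gamma\|_\infty,(\|\gamma^{-1}\|_\infty^{-1}-1)/2))$ that appears in the claim. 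Summing over $k$ and using that the Williamson diagonal form is $\tilde D=\diag(D,D)$, so that $\sum_k|d_k-\tilde d_k|=\tfrac12\|\tilde D-\tilde D'\|_1$, an application of Theorem \ref{thm:stability} with the unitarily invariant Schatten-$1$ norm yields
\begin{align*}
\|\tilde D-\tilde D'\|_1 \leq (\kappa(\gamma)\kappa(\tilde\gamma))^{1/2}\|\gamma-\tilde\gamma\|_1,
\end{align*}
which closes the estimate. Continuity of $H$ in $\gamma$ follows from continuity of the symplectic spectrum (again Theorem \ref{thm:stability}) together with continuity of $g$ on $[0,\infty)$ under the convention $g(0)=0$, and therefore extends to the whole cone of covariance matrices.

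The main obstacle is the logarithmic blow-up of $g'$ near zero. For states on the boundary $\gamma\geq i\sigma$ the smallest symplectic eigenvalue can take the value $1$, so $(d_k-1)/2$ may be arbitrarily close to zero and no uniform Lipschitz estimate can hold; this is why the quantitative bound is restricted to the interior of the cone and why the lower bound $\|\gamma^{-1}\|_\infty^{-1}$ has to enter the logarithm. Handling the boundary case would require replacing the Lipschitz step by a (weaker) modulus-of-continuity argument for $g$ near the origin, which the interior hypothesis allows us to avoid.
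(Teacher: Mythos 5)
Your strategy is the same as the paper's: control $|H(\gamma)-H(\tilde\gamma)|$ by a Lipschitz estimate on $g(x)=x\log x$ over the range of symplectic eigenvalues, then invoke the Schatten-$1$ case of Theorem~\ref{thm:stability}, with the factor $\tfrac12$ from $\tilde D=\diag(D,D)$ handled correctly. The one difference is technical and actually works in your favour: you use the mean value theorem with $g'(x)=1+\log x$, whereas the paper bounds $|x\log x-y\log y|$ by an ad hoc algebraic manipulation whose intermediate step $|y\log(1+(x-y)/y)|\le|x-y|$ is false in general (take $y=1$, $x=1/4$), so the MVT route is the cleaner way to set up the constant.

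There is, however, a concrete slip at the final step, shared with the paper's own proof. You argue that $\sup|g'|$ over the interval of arguments is attained at an endpoint (correct, since $g'$ is monotone) and then assert this ``produces the factor $1+\log(\max(\|\gamma\|_\infty,(\|\gamma^{-1}\|_\infty^{-1}-1)/2))$''. But $(\|\gamma^{-1}\|_\infty^{-1}-1)/2<1\le\|\gamma\|_\infty$, so this maximum is always $\|\gamma\|_\infty$ and the resulting factor $1+\log\|\gamma\|_\infty$ does \emph{not} diverge as $\gamma$ approaches the boundary of the cone --- contradicting your own (correct) closing remark that the bound must blow up there. What the MVT actually yields is
\begin{align*}
\sup|g'|=\max\Bigl(1+\log\|\gamma\|_\infty,\;\bigl|1+\log\bigl((\|\gamma^{-1}\|_\infty^{-1}-1)/2\bigr)\bigr|\Bigr),
\end{align*}
because $g'$ is large and \emph{negative} near zero; the absolute value around the logarithm at the lower endpoint is essential and cannot be absorbed into a single $\max$ inside one logarithm. (The paper's proof makes the parallel error of dropping the absolute value on $\log((d_k-1)/2)$ and then substituting the \emph{minimal} $d_k$ where an upper bound on a $\log$-term would require the maximal one.) Two further points worth tightening: the interval in your MVT step must also contain $(\tilde d_k\pm1)/2$, so the constant should involve $\|\tilde\gamma\|_\infty$ and $\|\tilde\gamma^{-1}\|_\infty^{-1}$ symmetrically; and for the continuity statement at $d_k=1$ one cannot use MVT since $g'$ is unbounded near $0$, so one should appeal directly to continuity of $g$ at $0$ (as you note) rather than to a Lipschitz bound.
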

\begin{proof}
The entropy is continuous, since $g$ is continuous and the symplectic eigenvalues are continuous.

Let $d_k$ be the entries of $D$, the Williamson diagonalisation of $\gamma$ (likewise $\tilde{d}_k$), which implies that $\frac{d_k+1}{2}\geq 1$ always and $\frac{d_k-1}{2}\geq 0$. For $x>0$ we have
\begin{align*}
	|x\log(x) - y\log(y)|=|x\log(x)-y\log(x)+y\log(x)-y\log(y)| \\
	=|(x-y)\log(x)+y\log(1+(x-y)/y)|\leq |\log(x)||x-y|+|x-y|
\end{align*}
For $x=0$, the upper bound is clearly also true, since $x\log(x)=0$. 
Using that $\log((d_k+1)/2)\leq \log(d_k)$, we obtain
\begin{align*}
	\left|\left(g\left(\frac{d_k+1}{2}\right)-g\left(\frac{d_k-1}{2}\right)\right)-\left(g\left(\frac{\tilde{d}_k-1}{2}\right)-g\left(\frac{\tilde{d}_k-1}{2}\right)\right)\right| \\
	\leq (1+\log(d_k))|d_k-\tilde{d}_k|+(1+\log((d_k-1)/2))|d_k-\tilde{d}_k|
\end{align*}
Taking the sum and noting that $d_k\leq \|D\|_{\infty}$ by assumption and $\min_{k} d_k\leq \|D^{-1}\|^{-1}_{\infty}$, we have
\begin{align*}
	|H(\gamma)-H(\tilde{\gamma})|&
		\leq (1+\log(\|D\|_{\infty}))\|D-\tilde{D}\|_{1}+(1+\log((1/\|D^{-1}\|_{\infty}-1)/2))\|D-\tilde{D}\|_{1}.
\end{align*}
The rest then follows by using Theorem \ref{thm:stability}.
\end{proof}
Note that the bound becomes arbitrarily bad if $D$ has eigenvalues close to one. This is to be expected, since the function $x\log(x)$ is not uniformly continuous at $0$ and hence cannot be norm bounded with a constant independent of $x$. 

Another interesting measure in quantum information is the Gaussian entanglement of formation. This is a measure to quantify the amount of entanglement needed to prepare a state of two systems under so-called LOCC operations (local quantum operations on each part of the systems and classical communication between the parts). It was shown in \cite{wol04} that this measure can be written as
\begin{align*}
	E_{\mathrm{form}}(\gamma_{AB})=\min \{H(\gamma_p)| \gamma_{AB}\geq S^TS, S\in Sp(2n)\}
\end{align*}
where $\gamma_p$ is the reduced state of $S^TS$ and $H(\cdot)$ denotes the entropy of entanglement. Using the methods of \cite{ide16} and the stability results of this paper, one can now prove:
\begin{prop}
The Gaussian entanglement of formation is continuous on the interior of the set of covariance matrices.
\end{prop}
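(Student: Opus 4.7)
The plan is to verify the hypotheses of Berge's Maximum Theorem for the correspondence
\[
\mathcal{F}(\gamma_{AB}) := \{M = S^T S : S \in Sp(2n),\ M \leq \gamma_{AB}\}
\]
and the objective $\Phi(M) := H(M_p)$, so that $E_{\mathrm{form}}(\gamma_{AB}) = \min_{M \in \mathcal{F}(\gamma_{AB})} \Phi(M)$ will inherit continuity. The three ingredients I need are compactness of $\mathcal{F}(\gamma_{AB})$, continuity of $\Phi$ on the relevant range, and Hausdorff continuity of $\mathcal{F}$ as a set-valued map.

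Compactness, continuity of $\Phi$, and outer hemicontinuity of $\mathcal{F}$ are the easier inputs and I would handle them first. The bound $M \leq \gamma_{AB}$ forces $\|M\|_{\infty} \leq \|\gamma_{AB}\|_{\infty}$, and since $M = S^T S \in Sp(2n)$ forces the singular values of $M$ to come in reciprocal pairs, $\|M^{-1}\|_{\infty} = \|M\|_{\infty}$ is likewise bounded; combined with closedness of $Sp(2n) \cap \{\mathrm{SPD}\}$ and of $\{M \leq \gamma_{AB}\}$, this yields compactness of $\mathcal{F}(\gamma_{AB})$. Continuity of $\Phi$ on the image follows from Corollary \ref{cor:entropy} applied to the block $M_p$ together with continuity of the linear map $M \mapsto M_p$. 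Outer (upper) hemicontinuity of $\mathcal{F}$ is immediate: any limit $M$ of a sequence $M_n \in \mathcal{F}(\gamma_{AB}^{(n)})$ with $\gamma_{AB}^{(n)} \to \gamma_{AB}$ satisfies $M \leq \gamma_{AB}$ and still lies in the closed set $\{S^T S : S \in Sp(2n)\}$, hence $M \in \mathcal{F}(\gamma_{AB})$.

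The crux will be inner (lower) hemicontinuity of $\mathcal{F}$: given $M \in \mathcal{F}(\gamma_{AB})$ and $\gamma_{AB}^{(n)} \to \gamma_{AB}$, I need to produce $M_n \in \mathcal{F}(\gamma_{AB}^{(n)})$ with $M_n \to M$. The two-step strategy is to first approximate $M$ by a sequence $M^{(k)} \in \mathcal{F}(\gamma_{AB})$ with $M^{(k)} < \gamma_{AB}$ strictly and $M^{(k)} \to M$; since strict inequality is an open condition, $M^{(k)} < \gamma_{AB}^{(n)}$ will then hold for all $n$ large, and a diagonal extraction delivers the required $M_n$. To construct the $M^{(k)}$ I will perturb $M$ inside the Lie submanifold $\mathcal{S}_+ := Sp(2n) \cap \{\mathrm{SPD}\}$, whose tangent space at $M$ is $\{MX + X^T M : X \in \mathfrak{sp}(2n)\}$, selecting a direction that strictly decreases $M$ in the null directions of $\gamma_{AB} - M$. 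The main obstacle here is that eigenvalue crossings in the Williamson decomposition of $\gamma_{AB}^{(n)}$ break continuity of the diagonalising matrix $S$, so a naive perturbation built from Williamson data can fail; it is precisely Theorem \ref{thm:stabilitysmaller}, which provides stability of $S^{-T} S^{-1}$ through such crossings, that supplies the quantitative control needed to ensure the constructed $M_n$ converge to $M$ at the correct rate. Combined with the Lie-algebraic perturbation techniques of \cite{ide16}, this yields inner hemicontinuity, and Berge's theorem then delivers continuity of $E_{\mathrm{form}}$ on the interior of the covariance-matrix set.
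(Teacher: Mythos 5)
Your proposal is architecturally the same as the paper's sketch, which offers two equivalent routes: (i) set-valued analysis showing that the correspondence behind the minimisation varies continuously (citing Theorem 4.4 of \cite{ide16}), combined with Corollary~\ref{cor:entropy}; or (ii) using Theorem~\ref{thm:stabilitysmaller} to produce, for a perturbed covariance matrix $\gamma_{AB}+\varepsilon E$, a nearby feasible $S_\varepsilon^T S_\varepsilon$, again combined with Corollary~\ref{cor:entropy}. Your Berge's Maximum Theorem framing is exactly route (i) made explicit, with the hemicontinuity of the correspondence as the key hypothesis, and you invoke Theorem~\ref{thm:stabilitysmaller} to supply route (ii) where it matters (inner hemicontinuity). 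The easy ingredients are handled more carefully than in the paper's one-paragraph sketch, and they are correct: $\|M^{-1}\|_\infty=\|M\|_\infty$ for symplectic positive definite $M$ (eigenvalues come in reciprocal pairs) gives the uniform bound needed for compactness, continuity of $\Phi$ follows from the qualitative first sentence of Corollary~\ref{cor:entropy}, and outer hemicontinuity works because the same bound keeps limits away from the boundary of the positive definite cone (the set $\{S^TS:S\in Sp(2n)\}$ is not closed by itself, only its intersection with $\{M\leq\gamma_{AB}\}$ is).

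There is, however, a real gap in your inner hemicontinuity argument that you should not leave to the reader. You propose to approximate $M\in\mathcal{F}(\gamma_{AB})$ by $M^{(k)}<\gamma_{AB}$ strictly by perturbing inside the manifold of symplectic positive definite matrices along a tangent direction $MX+X^TM$ with $X\in\mathfrak{sp}(2n)$ that ``strictly decreases $M$ in the null directions of $\gamma_{AB}-M$.'' It is not automatic that such a direction exists: for instance $-\tfrac12\id\notin\mathfrak{sp}(2n)$, so one cannot simply contract $M$ tangentially, and the tangent space at $M$ is a proper codimension-$n(2n-1)$ subspace of the symmetric matrices, so whether it meets the cone of matrices that are negative definite on $\ker(\gamma_{AB}-M)$ requires an actual argument. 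This is precisely the point where the paper's first alternative has an advantage: it parametrises the minimisation over the \emph{convex} set $\{\gamma_0: i\sigma\leq\gamma_0\leq\gamma_{AB}\}$, and convex-valued correspondences with continuous constraint data are automatically Hausdorff-continuous on the interior, so the tangent-space construction is never needed. If you want to stay with the non-convex set $\{S^TS:S^TS\leq\gamma_{AB}\}$, you should instead either (a) prove the tangent-space claim directly, or (b) use Theorem~\ref{thm:stabilitysmaller} as the paper suggests: for a feasible $M=S^TS$, realise $S$ as the Williamson transform of a suitable auxiliary matrix dominated by $\gamma_{AB}$, perturb that auxiliary matrix along with $\gamma_{AB}$, and read off a nearby feasible $S_\varepsilon^TS_\varepsilon$ whose distance from $M$ is controlled by the theorem. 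Either way, the crux needs filling in; as written your proposal has the same status as the paper's sketch, which is fair enough given that the paper itself only sketches the argument, but you should be aware that the Lie-algebraic tangent step is the place where a full proof would have to do real work.
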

\begin{proof}[Sketch of the proof] This can be proven in two ways. For $\gamma_{AB}$ in the interior of the set of covariance matrices, one can either use set-valued analysis as in the proof of Theorem 4.4 in \cite{ide16} to prove that the set $\{\gamma_{AB}\geq \gamma_0\geq i\sigma\}$ is convex and varies continuously with $\gamma_{AB}$. Then the result follows from Corollary \ref{cor:entropy}. 

Equivalently, one can use Theorem \ref{thm:stabilitysmaller} to show that for any $\varepsilon>0$ and any symmetric $E$ small enough, for any $S^TS\leq \gamma_{AB}$ there exists $S_{\varepsilon}^TS_{\varepsilon}\leq \gamma_{AB}+\varepsilon E$, such that their norm difference is small and then apply Corollary \ref{cor:entropy}. 
\end{proof}

As a last application, let us mention that the stability of $S^TS$ as in Theorem \ref{thm:stabilitysmaller} is implicitly useful in \cite{ide16}: There, we provide a program to compute an operational measure for squeezing. Given a covariance matrix $\gamma$ of a state to be constructed, the program first computes Williamson's normal form and takes $S^TS$ as a starting point. If $S^TS$ was not continuous in the covariance matrix, this would imply that rounding errors in $\gamma$ could result in the corresponding $S^TS$ not being a feasible point for the program. Theorem \ref{thm:stabilitysmaller} asserts that this problem cannot occur.

\paragraph*{Acknowledgements}
M.I. thanks the Studienstiftung des deutschen Volkes for financial support.

\printbibliography

\appendix
\section{Some useful lemmata} \label{app:calc}
In this appendix, we will first review the main nontrivial theorems we use in the main text to establish our results. In addition, we give a number of small lemmata that include calculations that are frequently used in the main text to eliminate $\varepsilon$-dependencies of the constants. Since these are not very important for the gist of the argument, they are collected here in order not to further clutter the main text.

\begin{lem}[\cite{wil65} Chapter 2 Section 10] \label{lem:wilkeigen}
Let $A$ be a Hermitian matrix with nondegenerate spectrum and $B$ be a perturbation with $\|B\|_{\infty}\leq 1$. Then there exists a number $c_{\mathrm{vec}}>0$ such that for all $c_{\mathrm{vec}}>\varepsilon>0$ we have 
\begin{align}
	\|x_i-x_i(\varepsilon)\|_2 \leq\frac{2n}{\min_{i\neq j}|\lambda_i-\lambda_j|}\varepsilon
\end{align}
where $x_i$ denotes the $i$-th eigenvector of $A$ and $x_i(\varepsilon)$ the $i$-th eigenvalue of $A+\varepsilon B$.
\end{lem}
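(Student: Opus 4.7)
The plan is to prove this by standard analytic perturbation theory for isolated eigenvalues. Since $A$ is Hermitian with nondegenerate spectrum, each $\lambda_i$ is isolated, so the Kato--Rellich theorem produces a radius $c_{\mathrm{vec}}>0$ on which the perturbed eigenpair $(\lambda_i(\varepsilon), x_i(\varepsilon))$ of $A+\varepsilon B$ depends analytically on $\varepsilon$. I would write the normalised analytic branch as $x_i(\varepsilon) = x_i + \varepsilon x_i^{(1)} + \varepsilon^2 x_i^{(2)} + \cdots$ with the standard phase convention $\langle x_i, x_i^{(1)}\rangle = 0$, which reduces the task to estimating the Taylor coefficients uniformly in $i$.

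For the first-order coefficient, I would differentiate $(A+\varepsilon B)x_i(\varepsilon) = \lambda_i(\varepsilon)x_i(\varepsilon)$ at $\varepsilon=0$ and take inner products with $x_j$ for $j\neq i$ to obtain
\begin{align*}
  \langle x_j, x_i^{(1)}\rangle = \frac{\langle x_j, Bx_i\rangle}{\lambda_i - \lambda_j}.
\end{align*}
Expanding $x_i^{(1)}$ in the orthonormal basis $\{x_j\}_{j\neq i}$ and applying Parseval together with $\sum_{j\neq i}|\langle x_j, Bx_i\rangle|^2 \leq \|Bx_i\|_2^2 \leq \|B\|_\infty^2 \leq 1$ would give $\|x_i^{(1)}\|_2 \leq 1/\min_{j\neq i}|\lambda_i - \lambda_j|$, so the stated bound already holds at leading order with a prefactor much smaller than $2n$.

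The actual work is controlling the tail $\sum_{k\geq 2}\varepsilon^k x_i^{(k)}$. The natural tool is the Riesz spectral projection
\begin{align*}
  P_i(\varepsilon) = -\frac{1}{2\pi i}\oint_{\Gamma_i}(A+\varepsilon B - z)^{-1}\,dz,
\end{align*}
where $\Gamma_i$ is the circle of radius $\delta/2$ around $\lambda_i$ with $\delta := \min_{j\neq i}|\lambda_i - \lambda_j|$. On $\Gamma_i$ one has $\|(A-z)^{-1}\|_\infty = 2/\delta$, so the Neumann series for $(A+\varepsilon B - z)^{-1}$ converges whenever $2|\varepsilon|/\delta < 1$. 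Integrating term by term identifies the analytic expansion of $P_i(\varepsilon)$, and recovering $x_i(\varepsilon)$ as a unit vector in the range of $P_i(\varepsilon)$ converts those resolvent bounds into coefficient estimates $\|x_i^{(k)}\|_2 \leq c^k$ for an explicit $c$ depending on $\delta$ and the matrix dimension $n$. Choosing $c_{\mathrm{vec}}$ small enough that $c\, c_{\mathrm{vec}} \leq 1/2$ then sums the geometric tail to $O(\varepsilon^2)$, which can be absorbed into the leading contribution $\varepsilon/\delta$ at the cost of the stated factor $2n$.

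The main obstacle is precisely this bookkeeping: one must pick $c_{\mathrm{vec}}$ explicitly as a function of $\delta$, $\|A\|_\infty$, and $n$ so that the full tail from orders $k\geq 2$, added to the first-order contribution, fits under $2n\varepsilon/\delta$. This is routine but must be tracked carefully; the qualitative content, namely that $\|x_i - x_i(\varepsilon)\|_2$ grows linearly in $\varepsilon$ with leading coefficient of order $1/\delta$, is already captured by the first-order computation and is all that is actually used downstream in Proposition \ref{prop:stabilitys} and Theorem \ref{thm:stabilitysmaller}.
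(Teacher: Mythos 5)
Your proof is correct and follows essentially the same route as the paper: expand the eigenvector in $\varepsilon$, bound the first-order coefficient using $\|B\|_\infty\leq 1$ and the spectral gap, and absorb the higher-order tail into the constant by taking $\varepsilon$ small. The one genuine refinement is your Parseval estimate $\|x_i^{(1)}\|_2\leq 1/\min_{j\neq i}|\lambda_i-\lambda_j|$, which is sharper by roughly a factor of $n$ than the paper's term-by-term triangle inequality (the paper bounds each $|\beta_{ji}|\leq 1$ and sums $n-1$ of them, arriving at $n/\delta$); this would propagate to lower the $n^{3/2}$ in Proposition~\ref{prop:stabilitys} to $n^{1/2}$. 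Both you and the paper leave the explicit size of $c_{\mathrm{vec}}$ unstated, though your Riesz-projection and Neumann-series sketch gives a concrete way to pin it down where the paper simply appeals to ``$\varepsilon$ small enough.''
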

\begin{proof}
Since this is not the exact formulation of the section in \cite{wil65}, a few words on how this theorem is related to what is written there: The section computes the first order term in the perturbative expansion of an eigenvector $x_i(\varepsilon)$. Since Hermitian eigenvectors are orthogonal the first order term of the eigenvector expansion of $x_1$ as in (10.2) of \cite{wil65} reads
\begin{align*}
	\leq \varepsilon \left(\frac{\beta_{21} x_2}{\lambda_1-\lambda_2}+\ldots +\frac{\beta_{n1} x_n}{\lambda_n-\lambda_2}\right),
\end{align*}
where $|\beta_{21}|\leq \|B\|_{\infty}\leq 1$ are some numbers and $\lambda_i$ are the eigenvalues of $A$. Hence for $\varepsilon$ small enough, we have
\begin{align*}
	\|x_i-x_i(\varepsilon)\|_2 &\leq \left(\frac{\|x_2\|_2}{\lambda_1-\lambda_2}+\ldots +\frac{\|x_n\|_2}{\lambda_n-\lambda_2}\right)\varepsilon +\mathcal{O}(\varepsilon^2) \\
	&\leq \frac{n}{\min_{i\neq j}|\lambda_i-\lambda_j|}\varepsilon +\mathcal{O}(\varepsilon^2).
\end{align*}
For $\varepsilon$ small enough, this then implies the bound in the theorem.
\end{proof}

\begin{lem}[\cite{bha96} Theorem VIII.3.9] \label{lem:bhatialemma}
Let $A$, $B$ be any two matrices such that $A=SD_1S^{-1}$, $B=TD_2T^{-1}$, where $S,T$ are invertible matrices and $D_1,D_2$ are real diagonal matrices. Then
\begin{align}
	\|\mathrm{Eig}^{\downarrow}(A)-\mathrm{Eig}^{\downarrow}(B)\|\leq (\kappa(S)\kappa(T))^{1/2}\|A-B\|
\end{align}
for every unitarily invariant norm. Here, $\kappa$ is the condition number and $\mathrm{Eig}^{\downarrow}$ denotes the (ordered) set of eigenvalues. 
\end{lem}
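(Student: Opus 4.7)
The plan is to reduce to the case of Hermitian matrices, where the corresponding inequality is the classical Lidskii--Weyl bound valid for every unitarily invariant norm, and to pay the cost of that reduction symmetrically between $S$ and $T$ so that the condition-number factor comes out as the geometric mean.

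First, because $D_1$ and $D_2$ are real diagonal, the matrices $A$ and $B$ have real spectra and are self-adjoint with respect to the modified inner products defined by $H := S^{-*}S^{-1}$ and $K := T^{-*}T^{-1}$. A direct computation gives $HA = S^{-*}D_1 S^{-1}$, which is Hermitian since $D_1$ is real diagonal; conjugating by $H^{-1/2}$ then produces the Hermitian matrix $\tilde{A} := H^{1/2} A H^{-1/2} = H^{-1/2}(HA)H^{-1/2}$ with $\spec(\tilde{A}) = \spec(A)$. Define $\tilde{B}$ analogously from $K$. For Hermitian matrices, Lidskii's inequality together with Ky Fan's majorization theorem yields
\begin{align*}
  \|\mathrm{Eig}^{\downarrow}(A) - \mathrm{Eig}^{\downarrow}(B)\| = \|\mathrm{Eig}^{\downarrow}(\tilde{A}) - \mathrm{Eig}^{\downarrow}(\tilde{B})\| \leq \|\tilde{A} - \tilde{B}\|
\end{align*}
for every unitarily invariant norm, reducing the theorem to a bound on $\|\tilde{A} - \tilde{B}\|$.

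The core technical step is then to establish $\|\tilde{A} - \tilde{B}\| \leq (\kappa(S)\kappa(T))^{1/2} \|A - B\|$. To obtain the symmetric geometric-mean factor rather than an unbalanced $\kappa(S)\kappa(T)$, I would exploit the gauge freedom $S \mapsto SU$, $T \mapsto TV$ with $U, V$ unitary, which leaves $\kappa(S)$ and $\kappa(T)$ unchanged but alters the quantitative estimates. Inserting an intermediate built from the positive square root $(H^{1/2}KH^{1/2})^{1/2}$ (the operator geometric mean of $H$ and $K$), writing $\tilde{A} - \tilde{B}$ as a telescoping sum around this intermediate, and estimating each term by the H\"older-type inequality $\|XYZ\| \leq \|X\|_{\infty} \|Y\| \|Z\|_{\infty}$ distributes the conjugating factors evenly, producing the bound $\kappa(S)^{1/2}\kappa(T)^{1/2}$ on the operator norms.

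The main obstacle is precisely this symmetrization. A naive estimate of $\|H^{1/2}AH^{-1/2} - K^{1/2}BK^{-1/2}\|$ decouples the two conjugations only at the price of the full product $\kappa(S)\kappa(T)$; extracting the geometric mean requires either the optimisation over unitary gauges sketched above or, more elegantly, an explicit identity writing $\tilde{A} - \tilde{B}$ directly as a triple product with $A - B$ sandwiched between the geometric mean $(H^{1/2}KH^{1/2})^{1/2}$ and its inverse. Once this identity is established, the estimate is forced by the unitary invariance of the norm, and the claim follows by combining it with the Hermitian-case inequality of the previous step.
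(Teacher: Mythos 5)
Your reduction to the Hermitian case is fine as far as it goes: with $H := S^{-*}S^{-1}$ the matrix $\tilde{A} := H^{1/2}AH^{-1/2}$ is indeed Hermitian, similar to $A$, and satisfies $\kappa(H^{1/2}) = \kappa(S)$, so the Lidskii--Wielandt step $\|\mathrm{Eig}^{\downarrow}(\tilde{A}) - \mathrm{Eig}^{\downarrow}(\tilde{B})\| \leq \|\tilde{A}-\tilde{B}\|$ is correct. But the ``core technical step'' you then propose, $\|\tilde{A}-\tilde{B}\| \leq (\kappa(S)\kappa(T))^{1/2}\|A-B\|$, is false --- and not just because the constant is off. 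Take $A=B$ and $T=SD$ for an invertible, non-unitary diagonal $D$ (this is a legitimate diagonalisation of $B$, since $D$ commutes with $D_1=D_2$). Then $K = (TT^*)^{-1} = (SD^2S^*)^{-1} \neq H$ in general, so $\tilde{A} = H^{1/2}AH^{-1/2}$ and $\tilde{B} = K^{1/2}AK^{-1/2}$ are two \emph{distinct} Hermitian matrices unitarily similar to $A$; hence $\|\tilde{A}-\tilde{B}\|>0$ while $\|A-B\|=0$. No inequality $\|\tilde{A}-\tilde{B}\|\leq c\,\|A-B\|$ can hold. Even your stated ``naive bound'' $\kappa(S)\kappa(T)\|A-B\|$ already fails here, so the issue is not a matter of symmetrising a constant.

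The repair you sketch does not rescue this. The gauge $S \mapsto SU$, $T\mapsto TV$ with $U,V$ unitary leaves $SS^*$ (hence $H$, hence $\tilde{A}$) completely unchanged, so it cannot ``alter the quantitative estimates''. The genuine freedom is $S \mapsto SW$ for invertible $W$ commuting with $D_1$, and one would then need to show that some such choice simultaneously controls $\kappa(SW)$ and makes $\|\tilde{A}-\tilde{B}\|$ small --- which is essentially as hard as the theorem itself, and the appeal to the operator geometric mean of $H$ and $K$ is not developed enough to bridge that gap. Bhatia's argument avoids comparing Hermitianised copies altogether: set $Z := S^{-1}T$ and use the two Sylvester-type identities
\begin{align*}
  S^{-1}(A-B)T = D_1 Z - Z D_2, \qquad
  S^{*}(A-B)^{*}T^{-*} = D_1 Z^{-*} - Z^{-*} D_2,
\end{align*}
together with a majorisation bound for such two-sided commutators of Hermitian matrices (Bhatia, Theorem VIII.3.6 and the surrounding material). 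Multiplying the two estimates and taking a square root is what produces the geometric-mean factor $(\kappa(S)\kappa(T))^{1/2}$; it does not come from a telescoping of Hermitian conjugates.
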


\begin{lem}[\cite{bha96} Theorem VII.3.2] \label{lem:stablesubspace}
Let $A,B\in \C^{n\times n}$ be Hermitian operators and let $S_1, S_2$ be any two subsets of $\R$ such that $\operatorname{dist}(S_1,S_2)=\delta>0$. Let $E=P_A(S_1)$ ($F=P_B(S_2)$) be the spectral projection onto the space spanned by the eigenvectors of $A$ ($B$) corresponding to eigenvalues in $S_1$. Then, for every unitarily invariant norm,
\begin{align}
	\|EF\|\leq \frac{\pi}{2\delta} \|A-B\|
\end{align}
\end{lem}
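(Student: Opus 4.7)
The plan is to follow the classical Davis--Kahan $\sin\Theta$ strategy: reduce the estimate to a Sylvester equation for $EF$ and then bound its solution by a double operator integral. The starting observation is that $E$, being the spectral projection of $A$ onto the eigenvalues in $S_1$, commutes with $A$, and likewise $F$ commutes with $B$. Setting $X := EF$ and $Y := E(A-B)F$ one therefore obtains
\begin{align*}
AX - XB = (AE)F - E(FB) = EAF - EBF = E(A-B)F = Y,
\end{align*}
and clearly $\|Y\| \leq \|A - B\|$ in every unitarily invariant norm. This identifies $X$ as the solution of a Sylvester equation whose source is norm-controlled by the perturbation.

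Next, I would rewrite $X$ as a Schur-type transformation of $Y$. Using the spectral decompositions $A = \sum_i a_i P_i$ and $B = \sum_j b_j Q_j$, where $P_i, Q_j$ are orthogonal projections onto the eigenspaces, one has $P_i E = P_i$ iff $a_i \in S_1$ and $Q_j F = Q_j$ iff $b_j \in S_2$. A direct computation then yields $P_i Y Q_j = (a_i - b_j)\, P_i Q_j$ for such pairs and $0$ otherwise, so
\begin{align*}
X = \sum_{a_i \in S_1,\, b_j \in S_2} P_i Q_j = \sum_{a_i \in S_1,\, b_j \in S_2} \frac{1}{a_i - b_j}\, P_i Y Q_j,
\end{align*}
with every denominator of modulus at least $\delta$. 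This realises $X$ as a Schur multiplier with kernel $\phi(\lambda,\mu) = (\lambda-\mu)^{-1}$ supported on $\{|\lambda-\mu|\geq\delta\}$ applied to $Y$.

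The remaining and most delicate step is to bound the operator norm of this multiplier by $\pi/(2\delta)$ in every unitarily invariant norm. The tool is the Birman--Solomyak double operator integral: if one can find $\psi:\R \to \R$ such that $\psi(x) = 1/x$ for $|x| \geq \delta$, then writing $\psi$ as the Fourier inversion of some $\hat\psi \in L^1(\R)$ gives
\begin{align*}
X = \int_{\R} \hat\psi(t)\, e^{itA}\, Y\, e^{-itB}\, dt,
\end{align*}
as one verifies by sandwiching both sides with the projections $P_i$ and $Q_j$ and reading off the Schur symbol. The bound $\|X\| \leq \|\hat\psi\|_{L^1}\, \|A-B\|$ then follows from the triangle inequality, the unitarity of $e^{itA}$ and $e^{-itB}$, and the invariance of the norm under unitary conjugation.

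The main obstacle is isolating the sharp constant $\pi/2$. A soft argument using any reasonable smooth extension of $1/x$ outside $[-\delta,\delta]$ already produces an estimate of the form $C/\delta$ with some universal $C$, and this would in fact suffice for the qualitative stability application in the body of the paper. Pinning down the value $\pi/(2\delta)$, however, requires the classical extremal construction of $\psi$ on the line (which by a scaling argument reduces to the case $\delta = 1$), or equivalently the Davis--Kahan contour-integral computation. Once this Fourier-analytic lemma is secured, the rest of the proof is mechanical.
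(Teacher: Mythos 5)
Your proof is correct, and it retraces the argument in the cited source (Bhatia, \emph{Matrix Analysis}, Chapter~VII), which the paper invokes without re-proving. The Sylvester-equation reduction $AX - XB = E(A-B)F$, the Schur-multiplier identity for $EF$, and the Fourier double-operator-integral bound are exactly Bhatia's steps, and the Fourier-analytic fact you defer, namely the existence of an $L^1$-extension $\psi$ of $x\mapsto 1/x$ with $\|\hat\psi\|_{L^1} = \pi/2$ after rescaling to $\delta=1$, is Bhatia's Theorem VII.2.11, so your modular delegation of that piece is exactly how the reference is structured.
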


\begin{lem}\label{lem:squareroot}
Let $A,B\in \C^{n\times n}$ be positive semidefinite operators. Then, for every unitarily invariant norm, 
\begin{align}
	\|A^{1/2}-B^{1/2}\|\leq \|A-B\|^{1/2}_{\infty} \|\id\| \label{eqn:square1}
\end{align}
\end{lem}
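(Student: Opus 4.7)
The plan is to reduce the lemma to two cleanly separated steps: first a scalar inequality between the operator norm of $A^{1/2} - B^{1/2}$ and that of $A - B$, and then a general bound relating an arbitrary unitarily invariant norm to the operator norm and $\|\id\|$.

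For the first step, I would set $t := \|A - B\|_\infty^{1/2}$, so that $A - B \leq t^2 \id$ and $B - A \leq t^2 \id$ as operator inequalities. The key idea is to invoke operator monotonicity of the square root (L\"owner--Heinz). From $A \leq B + t^2 \id$ and the elementary bound $(B^{1/2} + t\id)^2 = B + 2t B^{1/2} + t^2 \id \geq B + t^2 \id$ (valid because $B^{1/2} \geq 0$), taking square roots yields $A^{1/2} \leq (B + t^2\id)^{1/2} \leq B^{1/2} + t\id$. By symmetry $B^{1/2} \leq A^{1/2} + t\id$, so the Hermitian matrix $A^{1/2} - B^{1/2}$ has spectrum in $[-t, t]$, i.e.\ $\|A^{1/2} - B^{1/2}\|_\infty \leq \|A - B\|_\infty^{1/2}$.

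For the second step, I would use the standard fact that every unitarily invariant norm is of the form $\|X\| = \Phi(s(X))$ for a symmetric gauge function $\Phi$ applied to the vector of singular values $s(X)$. Since $s_j(X) \leq \|X\|_\infty$ coordinatewise, monotonicity of $\Phi$ gives
\begin{align*}
\|X\| = \Phi(s(X)) \leq \Phi(\|X\|_\infty, \ldots, \|X\|_\infty) = \|X\|_\infty \, \Phi(1,\ldots,1) = \|X\|_\infty \|\id\|.
\end{align*}
Applying this with $X = A^{1/2} - B^{1/2}$ and chaining with the inequality from the first step produces $\|A^{1/2} - B^{1/2}\| \leq \|A - B\|_\infty^{1/2} \|\id\|$, which is the claim.

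The only mildly delicate point is the first step, where one must make sure $(B + t^2 \id)^{1/2} \leq B^{1/2} + t\id$ is invoked correctly; I would note that this follows by comparing squares of two commuting positive operators and then applying operator monotonicity of $\sqrt{\cdot}$ once more. If $A$ or $B$ is only positive semidefinite one can either apply the argument directly (since operator monotonicity of the square root holds on $[0,\infty)$) or regularize by $A + \varepsilon\id$, $B + \varepsilon\id$ and let $\varepsilon \downarrow 0$, with continuity of the square root on positive definite matrices closing the argument.
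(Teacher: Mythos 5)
Your argument is correct and is a more self-contained route than what the paper gives. The paper simply refers to the proof of Theorem X.1.1 in Bhatia (the result that, for an operator monotone $f$ on $[0,\infty)$ with $f(0)=0$ and positive semidefinite $A,B$, one has $\|f(A)-f(B)\|\le \|f(|A-B|)\|$ in every unitarily invariant norm), specializes it to $f=\sqrt{\,\cdot\,}$, and then passes to the cruder bound $\||A-B|^{1/2}\|\le \|A-B\|_\infty^{1/2}\|\id\|$. You instead prove the operator-norm case directly via the shift trick $A\le B+t^2\id\le (B^{1/2}+t\id)^2$ with $t=\|A-B\|_\infty^{1/2}$, apply L\"owner--Heinz twice to conclude $A^{1/2}\le B^{1/2}+t\id$ (and by symmetry the reverse), and then use the symmetric-gauge-function inequality $\|X\|\le \|X\|_\infty\|\id\|$ to handle general unitarily invariant norms. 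The two proofs hinge on the same key ingredient, operator monotonicity of the square root with $0^{1/2}=0$, but yours avoids the heavier machinery behind Bhatia's theorem (which establishes the stronger intermediate inequality $\|A^{1/2}-B^{1/2}\|\le\||A-B|^{1/2}\|$ in every unitarily invariant norm). That strength is not needed here, since the lemma as stated already pays the $\|\id\|$ price, so your elementary argument is entirely adequate and arguably more transparent for the reader. The one point worth stating explicitly when you invoke $((B^{1/2}+t\id)^2)^{1/2}=B^{1/2}+t\id$ is that this holds because $B^{1/2}+t\id\ge 0$; you do gesture at it, and it is indeed the only mildly delicate step.
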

\begin{proof}
This follows directly from the proof of Theorem X.1.1 in \cite{bha96} using that the square root function is operator monotone on positive semidefinite matrices and $0^{1/2}=0$. 
\end{proof}

\begin{lem}\label{lem:inverse}
Let $A,B\in \C^{n\times n}$ be positive definite operators. Then for every unitarily invariant norm,
\begin{align}
	\|A^{-1}-B^{-1}\|\leq \|A^{-1}\|\|B^{-1}\|\|A-B\|
\end{align}
\end{lem}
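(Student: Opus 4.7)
The plan is to derive the bound from the classical resolvent-style identity
\[
A^{-1} - B^{-1} = A^{-1}(B - A)B^{-1},
\]
which is verified by multiplying on the left by $A$ and on the right by $B$. This reduces the problem to bounding the norm of a triple product.

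From there, I would invoke the standard fact that every unitarily invariant norm satisfies
\[
\|XYZ\| \leq \|X\|_{\infty}\,\|Y\|\,\|Z\|_{\infty}
\]
for all matrices $X,Y,Z$, where $\|\cdot\|_{\infty}$ is the operator norm. This property is a direct consequence of Ky Fan's dominance theorem, or equivalently follows from the fact that multiplication by a matrix of operator norm $c$ can only multiply each singular value of the other factor by at most $c$. Applying this with $X = A^{-1}$, $Y = B - A$, $Z = B^{-1}$ yields
\[
\|A^{-1} - B^{-1}\| = \|A^{-1}(B-A)B^{-1}\| \leq \|A^{-1}\|_{\infty}\,\|B-A\|\,\|B^{-1}\|_{\infty},
\]
and since $\|B - A\| = \|A - B\|$, this is precisely the claimed inequality (with the factors $\|A^{-1}\|$ and $\|B^{-1}\|$ understood as operator norms, as is consistent with how the lemma is invoked inside the proof of Theorem \ref{thm:stabilitysmaller}, where $\|\cdot\|$ denotes $\|\cdot\|_{\infty}$ throughout).

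There is really no hard step here: the entire content is the algebraic identity plus the sub-multiplicative majorization of unitarily invariant norms by the operator norm. The only mild subtlety, if one reads the statement literally with all norms equal to the same unitarily invariant norm, is that one must remember the two-sided sub-multiplicativity $\|XYZ\| \leq \|X\|_{\infty}\|Y\|\|Z\|_{\infty}$ rather than trying to make the RHS homogeneous in a single norm (which would be false in general for, say, the Schatten $1$-norm on all three factors). Positive definiteness of $A$ and $B$ enters only to guarantee that the inverses exist and are themselves bounded.
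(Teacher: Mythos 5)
Your approach is exactly the paper's: the same resolvent identity $A^{-1}-B^{-1}=A^{-1}(B-A)B^{-1}$ followed by the mixed submultiplicativity $\|XYZ\|\leq\|X\|_{\infty}\|Y\|\|Z\|_{\infty}$, and you are right that the outer factors must be read as operator norms for the stated inequality to hold for an arbitrary unitarily invariant norm. One small inaccuracy in your parenthetical remark: the Schatten $1$-norm is in fact submultiplicative (as is every Schatten $p$-norm, and more generally every unitarily invariant norm that dominates the operator norm), so it is not a counterexample to the homogeneous form $\|XYZ\|\leq\|X\|\,\|Y\|\,\|Z\|$; a genuine failure requires a norm strictly smaller than the operator norm, e.g.\ $c\|\cdot\|_{\infty}$ with $0<c<1$.
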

\begin{proof}
Calculate:
\begin{align*}
	\|A^{-1}-B^{-1}\|&=\|A^{-1}(\id-AB^{-1})BB^{-1}\|\leq \|A^{-1}\|\|B^{-1}\|\|(\id-AB^{-1})B\| \\
		&\leq \|A^{-1}\|\|B^{-1}\|\|A-B\|
\end{align*}
\end{proof}

\begin{lem}\label{lem:inversenorm}
Let $M$ be an invertible matrix, $E$ a matrix with $\|E\|_{\infty}=1$ and $\|M^{-1}\|_{\infty}\leq \frac{1}{2\varepsilon}$, then
\begin{align}
	\|(M+\varepsilon E)^{-1}\|_{\infty}\leq 2\|M^{-1}\|_{\infty}
\end{align}
\end{lem}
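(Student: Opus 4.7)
The plan is a standard Neumann series argument. First I would factor out $M$ by writing
\begin{align*}
	(M+\varepsilon E)^{-1} = \bigl(M(\id + \varepsilon M^{-1}E)\bigr)^{-1} = (\id + \varepsilon M^{-1}E)^{-1} M^{-1},
\end{align*}
so that submultiplicativity of the operator norm reduces the problem to bounding $\|(\id + \varepsilon M^{-1}E)^{-1}\|_\infty$.

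Next I would check that the perturbation is a strict contraction. Using submultiplicativity and the hypotheses $\|E\|_\infty = 1$ and $\|M^{-1}\|_\infty \leq 1/(2\varepsilon)$, one gets
\begin{align*}
	\|\varepsilon M^{-1} E\|_\infty \leq \varepsilon \|M^{-1}\|_\infty \|E\|_\infty \leq \tfrac{1}{2} < 1.
\end{align*}
In particular $\id + \varepsilon M^{-1}E$ is invertible (so the factoring above is legitimate and $M + \varepsilon E$ is indeed invertible).

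Then I would apply the standard Neumann series bound: for any operator $X$ with $\|X\|_\infty < 1$, the series $\sum_{k\ge 0}(-X)^k$ converges absolutely to $(\id + X)^{-1}$, and
\begin{align*}
	\|(\id + X)^{-1}\|_\infty \leq \sum_{k=0}^{\infty}\|X\|_\infty^k = \frac{1}{1-\|X\|_\infty}.
\end{align*}
Applied to $X = \varepsilon M^{-1}E$ with $\|X\|_\infty \leq 1/2$, this yields $\|(\id + \varepsilon M^{-1}E)^{-1}\|_\infty \leq 2$.

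Combining the two pieces gives $\|(M+\varepsilon E)^{-1}\|_\infty \leq 2\|M^{-1}\|_\infty$, which is the claim. There is no real obstacle here; the only thing to be careful about is to verify invertibility before manipulating $(M+\varepsilon E)^{-1}$ formally, and to ensure that the bound $\|\varepsilon M^{-1}E\|_\infty \leq 1/2$ is strict enough that the geometric series converges.
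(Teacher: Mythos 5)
Your proof is correct and rests on the same key idea as the paper's: bound $(\id+X)^{-1}$ by the Neumann series when $\|X\|_\infty\leq 1/2$. The paper takes a slight detour through the Woodbury identity $(M+\varepsilon E)^{-1}=M^{-1}-M^{-1}(\id+\varepsilon EM^{-1})^{-1}\varepsilon EM^{-1}$ before applying the same geometric-series bound, whereas your direct factorization $(M+\varepsilon E)^{-1}=(\id+\varepsilon M^{-1}E)^{-1}M^{-1}$ is a bit cleaner and reaches the bound in one step.
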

\begin{proof}
Using the Woodbury formula (which was not found by Woodbury \cite{hag89}), we have:
\begin{align*}
	(M+\varepsilon E)^{-1}=M^{-1}-M^{-1}(I+\varepsilon EM^{-1})^{-1}\varepsilon EM^{-1}.
\end{align*}
Since $\|M^{-1}\|_{\infty}\leq \frac{1}{2\varepsilon}$, the Neuman series of $(I+\varepsilon EM^{-1})^{-1}$ converges and we have $(I+\varepsilon EM^{-1})^{-1}=\sum_{n=0}^{\infty}\varepsilon^n(EM^{-1})^n$, and hence $\|(I+\varepsilon EM^{-1})^{-1}\|_{\infty}\leq \sum_{n=0}^{\infty} \varepsilon^n \|M^{-1}\|_{\infty}^n\leq 2$, which implies:
\begin{align*}
	\|(M+\varepsilon E)^{-1}\|_{\infty} \leq \|M^{-1}\|_{\infty}+\|M^{-1}\|_{\infty}\cdot 2\varepsilon \|EM^{-1}\|_{\infty}.
\end{align*}
Finally, since $\varepsilon \|M^{-1}\|_{\infty}\leq 1/2$ by assumption, we have $\|M^{-1}\|_{\infty}\cdot 2\varepsilon \|EM^{-1}\|_{\infty}\leq \|M^{-1}\|_{\infty}\cdot 2\varepsilon \|M^{-1}\|_{\infty}\leq \|M^{-1}\|_{\infty}$. 
\end{proof}

\begin{lem} \label{lem:kappaepsilon}
Let $M,E\in \R^{n\times n}$, $M\geq 0$ and $\|E\|_{\infty}=1$. If $\|M^{-1}\|_{\infty}\leq \frac{1}{2\varepsilon}$ and $\varepsilon<\|M\|_{\infty}$, we have
\begin{align}
	\kappa(M+\varepsilon E)\leq 4\kappa(M).
\end{align}
\end{lem}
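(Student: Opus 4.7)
The plan is to decompose $\kappa(M+\varepsilon E)=\|M+\varepsilon E\|_\infty\,\|(M+\varepsilon E)^{-1}\|_\infty$ and bound each factor by $2\|M\|_\infty$ and $2\|M^{-1}\|_\infty$ respectively, so that the product is $4\kappa(M)$.

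For the first factor I would simply apply the triangle inequality, obtaining $\|M+\varepsilon E\|_\infty\leq \|M\|_\infty+\varepsilon\|E\|_\infty=\|M\|_\infty+\varepsilon$, and then invoke the hypothesis $\varepsilon<\|M\|_\infty$ to conclude $\|M+\varepsilon E\|_\infty\leq 2\|M\|_\infty$. This step uses nothing but the norm axioms and $\|E\|_\infty=1$.

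For the second factor I would directly quote Lemma \ref{lem:inversenorm}, whose hypotheses ($\|E\|_\infty=1$ and $\|M^{-1}\|_\infty\leq 1/(2\varepsilon)$) coincide exactly with what is assumed here, to deduce $\|(M+\varepsilon E)^{-1}\|_\infty\leq 2\|M^{-1}\|_\infty$. Note that this is also where we implicitly need $M$ to be invertible; positive semidefiniteness together with $\|M^{-1}\|_\infty\leq 1/(2\varepsilon)$ being finite guarantees that $M$ is in fact positive definite, so $M^{-1}$ exists.

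Multiplying the two bounds yields the claim, and there is no real obstacle here: the entire work has been done in Lemma \ref{lem:inversenorm}, so the present lemma is essentially a one-line corollary packaging the condition-number bound for later use.
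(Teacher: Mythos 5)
Your proof is correct and matches the paper's argument: both decompose $\kappa(M+\varepsilon E)$ as $\|M+\varepsilon E\|_\infty\,\|(M+\varepsilon E)^{-1}\|_\infty$, invoke Lemma~\ref{lem:inversenorm} for the inverse factor, and use the triangle inequality with $\varepsilon<\|M\|_\infty$ for the other factor. Your added remark that finiteness of $\|M^{-1}\|_\infty$ forces $M$ to be positive definite is a sensible clarification the paper leaves implicit.
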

\begin{proof}
We use $\kappa(M+\varepsilon E)=\|M+\varepsilon E\|_{\infty}\|(M+\varepsilon E)^{-1}\|_{\infty}$ by definition and apply Lemma \ref{lem:inversenorm} to obtain:
\begin{align*}
	\kappa(M+\varepsilon E)\leq 2\|M+\varepsilon E\|_{\infty}\|M^{-1}\|_{\infty}
\end{align*}
Using $\|M+\varepsilon E\|_{\infty}\leq \|M\|+\varepsilon\leq 2\|M\|$ finishes the proof.
\end{proof}

\end{document}